\documentclass[11pt]{article}

\usepackage[T1]{fontenc}
\usepackage[letterpaper,margin=1in]{geometry}
\usepackage{amsmath,amssymb,amsthm,mathtools}
\usepackage{graphicx}
\usepackage{float}
\usepackage{placeins}
\usepackage{tikz}
\usetikzlibrary{arrows.meta,positioning,calc}
\usepackage{algorithm}
\usepackage{algpseudocode}
\usepackage{array}
\usepackage{booktabs}
\usepackage{enumitem}
\usetikzlibrary{arrows.meta,positioning,calc,fit,backgrounds}
\usepackage{microtype}
\usepackage{xurl}
\usepackage[hidelinks]{hyperref}
\expandafter\def\expandafter\UrlBreaks\expandafter{\UrlBreaks
  \do\-\do\_\do\.\do\/\do\=\do\&}

\hypersetup{
  pdftitle={A Counterexample to the Convergence of Three-Block ADMM with an Identity Third Constraint Block},
  pdfauthor={Kenan Xu, and Xiangfeng Wang},
  pdfsubject={AI-assisted discovery and exact certification of periodic nonconvergence for identity-slack three-block ADMM},
  pdfkeywords={ADMM, multi-block convex optimization, slack-variable reformulation, KKT conditions, periodic nonconvergence, piecewise-affine iteration, multiplier relaxation, research harness, AI-assisted discovery, AI for mathematics}
}

\theoremstyle{plain}
\newtheorem{theorem}{Theorem}[section]
\newtheorem{proposition}[theorem]{Proposition}

\newtheorem{corollary}[theorem]{Corollary}

\theoremstyle{remark}

\numberwithin{equation}{section}
\setlist[itemize]{leftmargin=2em,itemsep=0.35ex,topsep=0.6ex}
\setlist[enumerate]{leftmargin=2.2em,itemsep=0.4ex,topsep=0.6ex}

\newcommand{\R}{\mathbb R}
\newcommand{\Q}{\mathbb Q}
\newcommand{\diag}{\operatorname{diag}}
\newcommand{\ind}{\iota}
\newcommand{\argmin}{\operatorname*{arg\,min}}

\title{AI-Assisted Discovery and Construction of a Counterexample to the Convergence of Three-Block ADMM with the Identity Matrix as Its Third Constraint Block\thanks{Codes, certificates, prompts, and workspace manifests are available at \url{https://github.com/ConanXu-math/identity-slack-admm-cycle-certificate}.}\ \thanks{We would like to express our sincere gratitude to Professor Bingsheng He for encouraging me to explore the problem addressed in this paper using AI, and for the in-depth discussions we had throughout the process, from which we have benefited immensely.}}

\author{
Kenan Xu\thanks{School of Mathematical Sciences, East China Normal University, Shanghai 200241, P.R. China.} \and 
Xiangfeng Wang\thanks{Key Laboratory of Mathematics and Engineering Applications (MoE) and School of Mathematical Sciences, East China Normal University, Shanghai 200241, P.R. China.} 
}
\date{}

\begin{document}

\maketitle

\begin{abstract}
  The alternating direction method of multipliers (ADMM), as a landmark algorithm, has attracted tremendous research attention and extensive practical applications over the past two decades \cite{BPCPE2011}.
  It is well known that, although the two-block ADMM enjoys well-established theoretical convergence guarantees, its direct extension to the three-block case may fail to converge, as demonstrated by existing counterexamples \cite{CHYY2016}.
  However, to the best of our knowledge, the case in which the third constraint
  block is the identity remains unresolved: the existing literature gives neither
  a general convergence proof nor a counterexample for this subclass.
  In this paper, we give a negative answer: direct three-block ADMM may fail even when the first two blocks are strongly convex quadratics.
  Using Codex with GPT-5.6 Sol, we  construct an explicit rational
  counterexample candidate and verify it along a piecewise-affine reduction
  path; exact checks show that direct three-block ADMM on this instance
  produces a bounded nonconvergent orbit of period \(66\).
  Within the same Codex workflow, we further guide a study of multiplier
  relaxation and clarify when convergence can be restored at the fixed-instance
  and class levels: a problem-dependent small dual step can restore
  convergence, whereas no positive relative step works uniformly over the
  whole class.
  Furthermore, we also test the recent Kimi Code with Kimi K3 model without the Codex candidate or project-specific route guidance; along a different path it produces an
  exact locally attracting period-\(23\) certificate, convertible to an
  equivalent all-identity instance.
  The comparison suggests that different research-harness configurations can shape the mathematical objects explored and the certificates pursued.
\end{abstract}

\medskip

\section{The Open Problem}
\label{sec:introduction}

\subsection{The Identity-Slack Question}
\label{sec:identity-slack}

Consider the linearly constrained three-block convex program
\begin{equation}
\label{eq:general-three-block}
  \begin{aligned}
    \min_{x_1,x_2,x_3}\quad
    &\theta_1(x_1)+\theta_2(x_2)+\theta_3(x_3)\\
    \text{s.t.}\quad
    &A_1x_1+A_2x_2+A_3x_3=b,
  \end{aligned}
\end{equation}
where, for \(i=1,2,3\),
\(x_i\in\R^{n_i}\), \(A_i\in\R^{m\times n_i}\), \(b\in\R^m\), and
\(\theta_i:\R^{n_i}\to(-\infty,+\infty]\) is proper, closed, and convex.
Block constraints may be absorbed into \(\theta_i\) through indicator
functions, and the solution set is assumed nonempty.
Although two-block ADMM
is convergent under standard assumptions
\cite{BPCPE2011,EB1992}, direct three-block ADMM can fail for
general coefficient matrices.
Both the zero-objective feasibility example
and the strongly convex example in \cite{CHYY2016} lack an identity
third block.
We study the distinct subclass \(n_3=m\), \(A_3=I_m\).
Typically, this can be considered as the closest case compared with classical two-block problem.

This structure usually arises from the inequality-constrained problem
\begin{equation}
\label{eq:inequality-problem}
  \begin{aligned}
    \min_{x,y}\quad &F(x)+G(y)\\
    \text{s.t.}\quad &Ax+By\leq b,
  \end{aligned}
\end{equation}
where \(F:\R^{n_x}\to(-\infty,+\infty]\) and
\(G:\R^{n_y}\to(-\infty,+\infty]\) are proper, closed, and convex,
\(A\in\R^{m\times n_x}\), \(B\in\R^{m\times n_y}\), and \(b\in\R^m\).
All vector inequalities are understood componentwise, and
\(\R_+^m:=\{u\in\R^m:u\geq0\}\).
For a nonempty closed convex set \(C\), let \(\ind_C\) denote its indicator:
\(\ind_C(u)=0\) for \(u\in C\) and \(\ind_C(u)=+\infty\) otherwise.
Introducing a nonnegative slack variable gives
\begin{equation}
\label{eq:slack-problem}
  \begin{aligned}
    \min_{x,y,z}\quad &F(x)+G(y)+\ind_{\R_+^m}(z)\\
    \text{s.t.}\quad &Ax+By+z=b.
  \end{aligned}
\end{equation}
This is \eqref{eq:general-three-block} with
\((x_1,x_2,x_3)=(x,y,z)\) and \((A_1,A_2,A_3)=(A,B,I_m)\); the last
subproblem is projection onto \(\R_+^m\).  We consider the direct slack-last
sweep, rather than a grouped two-block reformulation \cite{LGSP2026}.
For \(\beta>0\) and \(\lambda\in\R^m\), the augmented Lagrangian function is
defined as
\begin{equation}
\label{eq:lagrangian}
  \mathcal L_\beta(x,y,z,\lambda)
  =F(x)+G(y)+\ind_{\R_+^m}(z)
  -\lambda^\top r+\frac{\beta}{2}\lVert r\rVert^2,
  \qquad r=Ax+By+z-b\in\R^m.
\end{equation}
Assuming that the subproblems attain solutions, direct three-block ADMM in the
order \(x\to y\to z\to\lambda\) is
\begin{subequations}
\label{eq:direct-admm}
\begin{align}
  x^{k+1}&\in\argmin_x
    \mathcal L_\beta(x,y^k,z^k,\lambda^k),\label{eq:direct-x}\\
  y^{k+1}&\in\argmin_y
    \mathcal L_\beta(x^{k+1},y,z^k,\lambda^k),\label{eq:direct-y}\\
  z^{k+1}&\in\argmin_z
    \mathcal L_\beta(x^{k+1},y^{k+1},z,\lambda^k),\label{eq:direct-z}\\
  \lambda^{k+1}&=\lambda^k-\beta
    (Ax^{k+1}+By^{k+1}+z^{k+1}-b).\label{eq:direct-lambda}
\end{align}
\end{subequations}
we can employ the normal-cone convention
\[
  N_C(u)=\{v:\langle v,w-u\rangle\leq0\ \text{for every }w\in C\},
  \qquad u\in C.
\]
With the sign convention in \eqref{eq:lagrangian}, a KKT point of
\eqref{eq:slack-problem} satisfies
\begin{equation}
\label{eq:kkt}
  \begin{gathered}
    A^\top\lambda^\star\in\partial F(x^\star),\qquad
    B^\top\lambda^\star\in\partial G(y^\star),\\
    Ax^\star+By^\star+z^\star=b,\qquad
    z^\star\in\R_+^m,\qquad
    \lambda^\star\in N_{\R_+^m}(z^\star).
  \end{gathered}
\end{equation}
As mentioned above, it is well known that, although the two-block ADMM enjoys well-established theoretical convergence guarantees, its direct extension to the three-block case may fail to converge, as demonstrated by existing counterexamples \cite{CHYY2016}.
However, available multi-block convergence results impose additional regularity or
parameter assumptions \cite{CHY2017,LMZ2015,LMZ2018,TY2018}, or modify
the iteration by correction, proximal, or back-substitution steps
\cite{HTY2012,HXY2023,HY2018,LST2015}.
None of these results
establishes convergence from \(A_3=I_m\) alone, leaving the identity-slack case unresolved: the existing literature gives neither a general convergence proof nor a counterexample for this subclass.
Professor Bingsheng He from Nanjing University also  mentioned this open problem in his talk \cite{hebma}.

\subsection{Main Results}
\label{sec:main-results}

In this paper, we give a negative answer to the above open problem through an AI-assisted research process, i.e., direct three-block ADMM may fail even when the first two blocks are strongly convex quadratics.
The main contributions are as follows:
\begin{itemize}
\item {\bf{Mathematically}}.
An exact rational period-\(66\) identity-slack counterexample (Theorem~\ref{thm:main}); problem-dependent multiplier-step convergence and the exclusion of any class-uniform relative step (Section~\ref{sec:relaxation}); and a locally attracting period-\(23\) certificate (Section~\ref{sec:route-comparison}).
\item {\bf{AI-assisted research process}}.
We document how human-guided reasoning-and-coding agents contributed to representation discovery, structured counterexample construction, exact certification, and theorem continuation.  Floating-point divergence or cycling is treated only as a clue; theorem-level claims require exact certificates.  Prompts, workspace records, human interventions, and verification artifacts are archived in Appendix~\ref{app:discovery-provenance}.
\end{itemize}

\subsection{Paper Organization}
\label{sec:organization}

Section~\ref{sec:counterexample} presents the period-\(66\) counterexample
constructed on the Codex (GPT 5.6 Sol) route, together with its analysis and exact
certification; Section~\ref{sec:relaxation} then studies multiplier
relaxation, including when convergence is restored and when it is not.
Section~\ref{sec:route-comparison} presents the period-\(23\) attracting
counterexample from the Kimi Code (Kimi K3) route and compares the different discovery
mechanisms of the two AI-assisted research routes.
Section~\ref{sec:conclusion} summarizes the main findings and discusses
implications for AI for optimization; the evidence workflow, prompts,
interventions, and verification records are collected in
Appendix~\ref{app:discovery-provenance}.

\section{A Period-66 Counterexample}
\label{sec:counterexample}

This section presents a rational period-\(66\) counterexample and its
piecewise-affine certificate.  The construction proceeds through four
mathematical steps: rejecting fixed-branch instability as insufficient without
a realizable projection itinerary; deriving the signed piecewise-affine
realization on \(s=(y,q)\), \(q=z+\lambda\) (Section~\ref{sec:reduction});
shifting the search from \((Q_1,Q_2)\) to resolvents \((M,N)\) and reachable
active-set itineraries; and reconstructing the candidate over \(\mathbb Q\)
with exact checks of closure, branch admissibility, the original ADMM
iteration, and minimality.  AI-assisted process records are collected in
Appendix~\ref{app:discovery-provenance}.

\subsection{Problem Data and Main Theorem}
\label{sec:instance}

Set \(A=B=I_2\), \(\beta=1\), and consider
\begin{equation}
\label{eq:counterexample-qp}
  \begin{aligned}
    \min_{x,y,z\in\R^2}\quad
    &\frac12x^\top Q_1x+\frac12y^\top Q_2y+\ind_{\R_+^2}(z)\\
    \text{s.t.}\quad &x+y+z=\bar b.
  \end{aligned}
\end{equation}
Define
\[
  M=(Q_1+I_2)^{-1},\qquad N=(Q_2+I_2)^{-1}.
\]
Then algorithm \eqref{eq:direct-admm} becomes
\begin{subequations}
\label{eq:specialized-admm}
\begin{align}
  x^{k+1}
  &=M(\bar b-y^k-z^k+\lambda^k),\label{eq:specialized-x}\\
  y^{k+1}
  &=N(\bar b-x^{k+1}-z^k+\lambda^k),\label{eq:specialized-y}\\
  z^{k+1}
  &=\bigl[\bar b-x^{k+1}-y^{k+1}+\lambda^k\bigr]_+,
    \label{eq:specialized-z}\\
  \lambda^{k+1}
  &=\lambda^k-
    (x^{k+1}+y^{k+1}+z^{k+1}-\bar b).
    \label{eq:specialized-lambda}
\end{align}
\end{subequations}
Let
\begin{equation}
\label{eq:short-data}
  \varepsilon=\frac1{1000},\qquad
  \mu=\frac{8957}{10000},\qquad
  \nu=\frac{999}{1000},\qquad
  d_1=(-1,20)^\top,\qquad d_2=(-1,10)^\top,
\end{equation}
and define
\begin{equation}
\label{eq:MNQ-data}
\begin{aligned}
  M&=\varepsilon I_2+(\mu-\varepsilon)
       \frac{d_1d_1^\top}{d_1^\top d_1},
  &Q_1&=M^{-1}-I_2,\\
  N&=\varepsilon I_2+(\nu-\varepsilon)
       \frac{d_2d_2^\top}{d_2^\top d_2},
  &Q_2&=N^{-1}-I_2.
\end{aligned}
\end{equation}

To define the right-hand side, prescribe a KKT point.  Choose
\[
  z^\star=(0,1)^\top,\qquad \lambda^\star=(-1,0)^\top,
\]
and set
\begin{equation}
\label{eq:b-data}
  x^\star=Q_1^{-1}\lambda^\star,\qquad
  y^\star=Q_2^{-1}\lambda^\star,\qquad
  \bar b=x^\star+y^\star+z^\star.
\end{equation}
By construction,
\[
  Q_1x^\star=\lambda^\star,\qquad
  Q_2y^\star=\lambda^\star,\qquad
  x^\star+y^\star+z^\star=\bar b,\qquad
  \lambda^\star\in N_{\R_+^2}(z^\star),
\]
so \((x^\star,y^\star,z^\star,\lambda^\star)\) satisfies the KKT conditions
\eqref{eq:kkt}.  Every entry of \(Q_1,Q_2,\bar b\) is rational.  Moreover,
\begin{equation}
\label{eq:Q-spectra-short}
  \sigma(Q_1)=\left\{999,\frac{1043}{8957}\right\},
  \qquad
  \sigma(Q_2)=\left\{999,\frac1{999}\right\},
\end{equation}
where \(\sigma(Q)\) denotes the spectrum.  Thus \(Q_1,Q_2\succ0\).
Strong convexity in \((x,y)\) and the constraint determine a unique primal
solution, while \(Q_1x^\star=\lambda^\star\) fixes the multiplier.  Hence the
KKT point is unique.  All three ADMM subproblems also have unique minimizers,
so the iteration is single-valued.

\begin{theorem}[Exact rational period-\(66\) counterexample]
\label{thm:main}
For the rational problem \eqref{eq:counterexample-qp}--\eqref{eq:b-data},
there is a rational initialization for which the sequence generated by the
unmodified direct three-block ADMM \eqref{eq:direct-admm} is bounded, non-KKT,
and periodic with minimal period \(66\).  Its strict projection-sign sequence
is
\begin{equation}
\label{eq:mask-word}
  \mathcal W=(00)^2(01)^{64},
\end{equation}
where the \(i\)-th bit is \(1\) if the corresponding projection input is
positive and \(0\) if it is negative.
All \(132\) projection inequalities are strict
with a common margin greater than \(10^{-3}\).
Therefore an identity third constraint block does not guarantee unconditional global convergence of direct three-block ADMM.
\end{theorem}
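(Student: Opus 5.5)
Because the iteration \eqref{eq:specialized-admm} is single-valued and built from affine maps and a componentwise projection, I will reduce it to a piecewise-affine map and certify the cycle with exact rational arithmetic. The reduction uses \(s=(y,q)\) with \(q=z+\lambda\). From \eqref{eq:specialized-z}–\eqref{eq:specialized-lambda}, set \(w^{k+1}=\bar b-x^{k+1}-y^{k+1}+\lambda^k\). Then \(z^{k+1}=[w^{k+1}]_+\) and \(\lambda^{k+1}=w^{k+1}-z^{k+1}=-[w^{k+1}]_-\), so \(q^{k+1}=w^{k+1}\). Conversely, \(z=[q]_+\) and \(\lambda=-[q]_-\) are recovered from \(q\), because \(z\ge0\), \(\lambda\le0\) and \(z_i\lambda_i=0\) after one step.

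Rewriting \(-z^k+\lambda^k\) as \(q^k-2z^k=q^k-2[q^k]_+\) (equivalently \(-|q^k|\)) expresses \(x^{k+1}\), \(y^{k+1}\) and \(w^{k+1}\) as affine functions of \((y^k,q^k)\) and the signs of \(q^k\). Hence \(T(s)=L_\sigma s+c_\sigma\) on each closed orthant cell determined by the sign vector \(\sigma\in\{0,1\}^2\) of \(q\), with rational \(L_\sigma\) and \(c_\sigma\) computed from \(M\), \(N\) and \(\bar b\). The "projection input" at iteration \(k+1\) is exactly \(q^{k+1}=w^{k+1}\), so the word \(\mathcal W\) prescribes the cell sequence \(\sigma_1,\dots,\sigma_{66}\).

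\textbf{Constructing the initial point.} Compose the affine branches along \(\mathcal W\) to get \(\Phi(s)=L s+c\), with \(L=L_{\sigma_{66}}\cdots L_{\sigma_1}\) (suitably ordered). I will check exactly over \(\mathbb Q\) that \(I-L\) is invertible, and take \(s^0=(I-L)^{-1}c\), which is rational. Along the orbit \(s^j=T^j(s^0)\), computed with the prescribed branches, I will verify all 132 strict sign inequalities on the coordinates of \(q^j\), each with absolute value greater than \(10^{-3}\). This branch-admissibility check shows that the prescribed affine pieces really are the ones the projection selects, so \(s^0\) is a genuine periodic point of \(T\). I then lift back to \((x^k,y^k,z^k,\lambda^k)\) and re-run the original iteration \eqref{eq:direct-admm} in rational arithmetic for 66 steps to confirm \((y^{66},z^{66},\lambda^{66})=(y^0,z^0,\lambda^0)\). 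Here \(x^0\) is irrelevant; alternatively, I start from the iterate after one step. Boundedness is immediate from periodicity.

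\textbf{Minimality and non-KKT.} For minimality, I check exactly that \(s^j\ne s^0\) for every proper divisor \(j\) of 66 (\(1,2,3,6,11,22,33\)). It also suffices to note that the word \(\mathcal W=(00)^2(01)^{64}\) has a unique \(00\)-run, so no proper cyclic shift by a divisor reproduces it, and hence no smaller period is possible. For non-KKT: the KKT point is unique, and the ADMM fixed points are exactly the KKT points. Its \(q^\star=z^\star+\lambda^\star=(-1,1)\) has sign pattern \(01\), while the orbit visits pattern \(00\). So the orbit is not constant, and no iterate on it is the KKT point, since the KKT point is fixed and would force period 1.

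\textbf{Main obstacle.} The hard part is purely computational-certificatory. The 66-fold product of \(2\times2\)-block (4-dimensional) rational matrices produces huge numerators and denominators, and all inequalities must hold exactly, not just in floating point. I would do all of this in exact rational arithmetic, for example with fractions in a CAS. Once the numbers are in hand, the logic above is routine. The conceptual point to state carefully is the equivalence between admissible sign itineraries and true orbits of the projection map.
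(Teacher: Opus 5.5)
Your proposal is correct and follows essentially the paper's own proof. It uses the same signed reduction to $s=(y,q)$ with $q=z+\lambda$ and $x^{+}=M(\bar b-y-|q|)$, composes the branch maps along the word, takes the rational fixed point $(I-L)^{-1}c$ (the paper's $(I_4-\mathcal P)^{-1}a$), exactly checks the $132$ strict sign inequalities and replays raw ADMM, derives minimality from the sign word (your unique-$00$-run shift argument is an equally valid variant of the paper's count of $00$ symbols in the two halves), and gets non-KKT-ness because the KKT point is a fixed point of the single-valued iteration. One point to tighten: for the full sequence $(x^k,y^k,z^k,\lambda^k)$ to be $66$-periodic you should set $x^0=M(\bar b-y^{65}-|q^{65}|)$, as the paper does, or start from the iterate after one step, as you suggest, rather than leaving $x^0$ arbitrary.
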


\subsection{Orbit Geometry}
\label{sec:orbit-geometry}

Figure~\ref{fig:cycle-geometry} is a decimal visualization of the exact
rational sequence in Theorem~\ref{thm:main}.  The values
\(q^k=z^k+\lambda^k\) form a closed loop that does not contain the unique KKT
value \(q^\star=(-1,1)\).

\begin{figure}[H]
\centering
\includegraphics[width=0.92\textwidth]{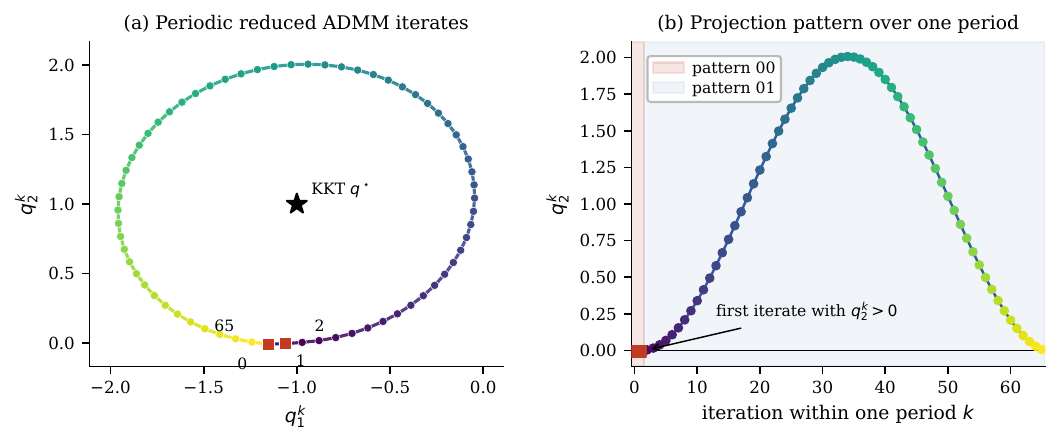}
\caption{Decimal visualization of the exact rational period-\(66\) sequence.
Red squares mark the two \(00\) iterates; the remaining iterates have pattern
\(01\).  The KKT coordinate \(q^\star=(-1,1)\) is not on the periodic
sequence.}
\label{fig:cycle-geometry}
\end{figure}

Numerically, the dominant \(01\)-branch pair has modulus
\(1.00018\) and angle \(0.0951955\approx2\pi/66\), while
\(\rho(T_{00})\approx0.97058\).  Thus \(64\) near-rotational \(01\)
steps followed by two resetting \(00\) steps explain the word
\((00)^2(01)^{64}\).  This is only intuition; the proof follows.

\subsection{Exact Period Certificate}
\label{sec:reduction}

We now reduce the ADMM iteration to verify the periodic sequence in
Theorem~\ref{thm:main}.
For quadratic programs, local ADMM behavior and parameter selection can often
be reduced to linear or affine iterations after active-set identification
\cite{B2013,GTSJ2015,LFP2017}.  Here we instead compose an entire
finite sign itinerary in exact rational arithmetic.

Define the orthant-projection input
\begin{equation}
\label{eq:q-definition}
  q^{k+1}=\bar b-x^{k+1}-y^{k+1}+\lambda^k.
\end{equation}
The \(z\)- and multiplier updates give
\begin{equation}
\label{eq:projection-identity}
  z^{k+1}=[q^{k+1}]_+,\qquad
  \lambda^{k+1}=[q^{k+1}]_-,\qquad
  q^{k+1}=z^{k+1}+\lambda^{k+1}.
\end{equation}
For \(a\in\R^2\), write
\(([a]_+)_i=\max\{a_i,0\}\), \(([a]_-)_i=\min\{a_i,0\}\), and
\((|a|)_i=|a_i|\) for \(i=1,2\).
Thus \eqref{eq:projection-identity} shows that the single vector \(q^{k+1}\)
uniquely determines both \(z^{k+1}\) and \(\lambda^{k+1}\).  Since
\(z^{k+1}=\Pi_{\R_+^2}(q^{k+1})\), the projection optimality condition gives
\[
  \lambda^{k+1}=q^{k+1}-z^{k+1}
  \in N_{\R_+^2}(z^{k+1}).
\]
We index the recurrence after the projection--multiplier update and set
\[
  s^k=((y^k)^\top,(q^k)^\top)^\top\in\R^4.
\]
Since \(z-\lambda=|q|\), the eliminated variables are recovered by
\begin{equation}
\label{eq:raw-reconstruction}
  z=[q]_+,\qquad \lambda=[q]_-,\qquad
  x^+=M(\bar b-y-z+\lambda)=M(\bar b-y-|q|).
\end{equation}
Define \(p:=\bar b-x^+-z+\lambda\) and \(g:=(I_2-M)\bar b\).  Substitution
gives the reduced recurrence
\begin{subequations}
\label{eq:signed-recurrence}
\begin{align}
  p&=My-(I_2-M)|q|+g,\label{eq:signed-p}\\
  y^+&=Np,\label{eq:signed-y}\\
  q^+&=\bar b-x^+-y^++\lambda
       =p+z-y^+=(I_2-N)p+[q]_+.\label{eq:signed-q}
\end{align}
\end{subequations}
Together with \eqref{eq:raw-reconstruction}, this recurrence is equivalent to
the original ADMM after the first projection--multiplier update.

For a strict projection pattern
\(D=\diag(\mathbf 1_{\{q_1>0\}},\mathbf 1_{\{q_2>0\}})\), one has
\([q]_+=Dq\) and \(|q|=S_Dq\), where \(S_D=2D-I_2\).  Hence, on the
corresponding polyhedral projection region,
\eqref{eq:signed-recurrence} is the affine update
\begin{equation}
\label{eq:branch-map}
\begin{aligned}
  s^+&=T_Ds+h,\\
  T_D&=
  \begin{pmatrix}
    NM&-N(I_2-M)S_D\\
    (I_2-N)M&D-(I_2-N)(I_2-M)S_D
  \end{pmatrix},
  & h&=\binom{Ng}{(I_2-N)g}.
\end{aligned}
\end{equation}
Only two projection patterns are needed:
\(D_{00}=\diag(0,0)\) and \(D_{01}=\diag(0,1)\).  Write
\(T_{00}:=T_{D_{00}}\) and \(T_{01}:=T_{D_{01}}\), and represent each affine
update in homogeneous coordinates by
\begin{equation}
\label{eq:affine-lift}
  L_\omega=\begin{pmatrix}T_\omega&h\\0&1\end{pmatrix}
  \in\R^{5\times5},
  \qquad \omega\in\{00,01\}.
\end{equation}
Composing the affine updates in the order prescribed by
\eqref{eq:mask-word} gives
\begin{equation}
\label{eq:period-map}
  L_{01}^{64}L_{00}^2=
  \begin{pmatrix}\mathcal P&a\\0&1\end{pmatrix}.
\end{equation}
Here \(\mathcal P\in\Q^{4\times4}\) and \(a\in\Q^4\).  Exact rational
elimination verifies \(\det(I_4-\mathcal P)\neq0\).  Hence
\(s^{66}=s^0\) has the unique rational solution
\begin{equation}
\label{eq:period-initialization}
  s^0=(I_4-\mathcal P)^{-1}a\in\Q^4.
\end{equation}
Thus the cycle is an affine fixed point selected by the nonzero offset \(a\),
which comes from \(\bar b\), rather than a unit-eigenvalue orbit of
\(\mathcal P\) or an effect of linear objective terms.
Prescribing a branch itinerary and then solving the fixed-point equation of its
return map is related to constructive counterexample methodology
\cite{GDT2023}; the projection admissibility of every step must still be
verified here.

It remains to verify that the fixed point
\eqref{eq:period-initialization} follows the prescribed projection branches.
Exact rational iteration of \eqref{eq:signed-recurrence} gives
\begin{equation}
\label{eq:strict-signs}
  q_1^k<0\quad(0\leq k<66),\qquad
  q_2^k<0\quad(k=0,1),\qquad
  q_2^k>0\quad(2\leq k<66).
\end{equation}
More precisely, if \(\delta_k=0\) for \(k=0,1\) and \(\delta_k=1\) otherwise,
then
\begin{equation}
\label{eq:strict-margin}
  \min_{0\leq k<66}
  \min\{-q_1^k,(2\delta_k-1)q_2^k\}
  >\frac1{1000}.
\end{equation}
Hence every branch is admissible and the orthant projection realizes
\((00)^2(01)^{64}\).

Equation~\eqref{eq:raw-reconstruction} recovers the full ADMM sequence.  Define
\[
  x^0=M(\bar b-y^{65}-|q^{65}|).
\]
Together with \(z^0=[q^0]_+\) and \(\lambda^0=[q^0]_-\), this gives
\((x^0,y^0,z^0,\lambda^0)\in\Q^8\), and the step-\(65\) update returns the
full state.
If the ADMM sequence had a proper subperiod, its projection-sign sequence
would be a proper repetition.  The only possible repetition factor is two,
because the sequence has length \(66\) and contains exactly two occurrences
of \(00\).  Its two length-\(33\) halves, however, contain different numbers
of \(00\) symbols.  The first return therefore occurs at step \(66\).
The periodic sequence cannot contain the unique KKT point, which is a fixed
point of this single-valued ADMM iteration.  It is consequently non-KKT,
nonconvergent, and bounded.  This proves the theorem.

The strict projection margin and nonsingularity of the return equation also
make the cycle persist under sufficiently small perturbations of the problem
data; see Corollary~\ref{cor:robustness}.

\section{Multiplier Relaxation}
\label{sec:relaxation}

With the period-\(66\) certificate established, a natural next question is
whether a smaller multiplier step restores convergence on the same instance.
We kept the rational QP, \(\beta\), and initialization fixed, and had the
Codex route explore only \(\tau\).  The resulting branch maps, candidate
regimes, and certificate proposals were then checked by exact replay; the
authors separated the claims into local KKT stability, convergence from the
specified initialization, and a problem-dependent class-level extension
(Appendix~\ref{app:discovery-provenance}).

Keep the primal penalty \(\beta>0\) and replace only the multiplier update by
\begin{equation}
\label{eq:relaxed-update}
  \lambda^{k+1}=\lambda^k-\tau
  (Ax^{k+1}+By^{k+1}+z^{k+1}-b),
  \qquad \tau>0.
\end{equation}
The standard method corresponds to \(\tau=\beta\).  In the counterexample
below \(\beta=1\), so the certified choice \(\tau\approx1/2\) is a substantial
multiplier relaxation, not a small perturbation of the standard step.  We
first state a class-level result and its limitation, and then give sharper
ranges for the period-\(66\) instance.

\subsection{Small-Step Convergence and Its Limits}

The small-dual-step contraction principle is due to Hong and Luo
\cite[Theorem~3.1 and (3.16)--(3.17)]{HL2017}.  Their argument uses primal
and dual error bounds obtained under compact-polyhedral assumptions.  The
contribution needed here is to verify those bounds globally for the noncompact
slack-last model under global smooth strong convexity and the full-row-rank
condition on \([A\ B]\).  Thus the next proposition is an
assumption-replacement specialization of the Hong--Luo framework, rather than
a new general small-step principle.

\begin{proposition}[Hong--Luo-type slack-last specialization]
\label{prop:general-small-dual-step}
In \eqref{eq:slack-problem}, suppose that
\(F:\R^{n_x}\to\R\) and \(G:\R^{n_y}\to\R\) are continuously
differentiable, respectively \(\mu_F\)- and \(\mu_G\)-strongly convex,
and have \(L_F\)- and \(L_G\)-Lipschitz gradients, where
\(\mu_F,\mu_G>0\) and \(L_F,L_G<\infty\).
Assume also that \([A\ B]\) has full row rank.  For every fixed
\(\beta>0\), there exists
a problem-dependent \(\bar\tau>0\) such that, for every
\(0<\tau<\bar\tau\), the direct \(x\to y\to z\to\lambda\) iteration
with \eqref{eq:relaxed-update} converges R-linearly from every finite
initialization satisfying \(z^0\in\R_+^m\) to the unique KKT point.
\end{proposition}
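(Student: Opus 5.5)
Our plan follows the Hong--Luo template \cite{HL2017}: a merit function built from the augmented Lagrangian and a dual-error quantity decreases at a rate controlled by the primal descent, provided \(\tau\) is small. The error bounds are established directly for the slack-last model, in place of the compact-polyhedral hypotheses.

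We take the potential
\[
\Phi^k=\mathcal L_\beta(x^k,y^k,z^k,\lambda^k)-2d(\lambda^k),\qquad d(\lambda)=\min_{x,y,z\geq0}\mathcal L_\beta(x,y,z,\lambda),
\]
i.e.\ primal gap plus dual gap, as in Hong--Luo. The dual function \(d\) is the augmented dual.

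\textbf{Primal descent.} Since \(F\) and \(G\) are \(\mu_F\)- and \(\mu_G\)-strongly convex and each block is minimized exactly, one sweep gives
\[
\mathcal L_\beta(x^k,y^k,z^k,\lambda^k)-\mathcal L_\beta(x^{k+1},y^{k+1},z^{k+1},\lambda^k)\ \geq\ \tfrac{\mu_F}{2}\|x^{k+1}-x^k\|^2+\tfrac{\mu_G}{2}\|y^{k+1}-y^k\|^2+\tfrac{\beta}{2}\|z^{k+1}-z^k\|^2.
\]
The \(z\)-term uses \(A_3=I_m\), which makes the \(z\)-subproblem \(\beta\)-strongly convex. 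The dual step then costs \(\tau\|r^{k+1}\|^2\).

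\textbf{Dual ascent.} \(d\) is concave and differentiable with \(\nabla d(\lambda)=-r(\lambda)\), where \(r(\lambda)\) is the residual at the exact minimizer. This gradient is \(1/\beta\)-Lipschitz. The relaxed update is therefore an inexact gradient ascent step on \(d\), with error \(r^{k+1}-r(\lambda^k)\).

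\textbf{Error bounds.} These are the step we expect to be the main obstacle, since Hong--Luo obtain them from compactness and polyhedrality, which fail here because \(\R_+^m\) is unbounded. Two bounds are needed.
\begin{enumerate}
\item[(i)] \emph{Primal bound.} The distance from \((x^{k+1},y^{k+1},z^{k+1})\) to the exact minimizer \((x(\lambda^k),y(\lambda^k),z(\lambda^k))\) is at most \(c_1\) times the size of the successive differences. We intend to get this from strong monotonicity: \(\nabla F\oplus\nabla G\oplus N_{\R_+^m}\) plus the quadratic penalty is strongly monotone in \((x,y)\), and \(z\) is recovered through the nonexpansive projection. The constant then depends only on \(\mu\), \(L\), \(\beta\), \(\|A\|\) and \(\|B\|\), and compactness is not needed.
\item[(ii)] \emph{Dual bound.} We need \(\mathrm{dist}(\lambda,\Lambda^\star)\leq c_2\|\nabla d(\lambda)\|\) globally, with \(\Lambda^\star=\{\lambda^\star\}\). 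We plan to write the KKT system as \(\lambda=\Pi_{\R_-^m}(\lambda-\beta(Ax+By-b+\cdot))\). Full row rank of \([A\ B]\) makes \(\lambda\mapsto\) \((A^\top\lambda,B^\top\lambda)\) injective. Strong convexity with \(L\)-smoothness makes \(\nabla F^{*}\) and \(\nabla G^{*}\) Lipschitz and strongly monotone. Together these give a global Lipschitz inverse for the natural residual map of the dual, namely \(-\nabla d^*\)-type strong monotonicity of \(\lambda\mapsto Ax(\lambda)+By(\lambda)\) on the span. A residual-to-distance bound then follows by a standard projection argument.
\end{enumerate}
If a global constant fails in (ii), our fallback is to use level-boundedness of \(\Phi\), which is coercive by strong convexity and the full-rank condition, to restrict to a bounded set containing the whole trajectory. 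There the bound holds with a constant depending on the initialization, which still yields R-linear convergence.

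\textbf{Conclusion.} Combining the pieces exactly as in \cite[(3.16)--(3.17)]{HL2017} gives
\[
\Phi^k-\Phi^{k+1}\ \geq\ c\bigl(\|x^{k+1}-x^k\|^2+\|y^{k+1}-y^k\|^2+\|z^{k+1}-z^k\|^2+\tau\|r^{k+1}\|^2\bigr)
\]
for every \(\tau<\bar\tau\). Here \(\bar\tau\) is chosen so that the dual cost \(\tau\|r^{k+1}\|^2\) and the inexactness term are absorbed by the primal descent via (i). The error bounds also give \(\Phi^{k+1}-\Phi^\star\leq C(\Phi^k-\Phi^{k+1})\). This yields Q-linear decrease of \(\Phi\) and hence R-linear convergence of the iterates. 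The limit satisfies the KKT system with zero residual, so it is the unique KKT point. Finiteness of the initialization and \(z^0\in\R_+^m\) only make \(\Phi^0\) finite.
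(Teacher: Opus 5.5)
Your proposal is correct and follows essentially the same route as the paper. It uses the Hong--Luo primal-plus-dual gap potential, exact-block-minimization descent, a global primal error bound from the joint strong convexity of \(\mathcal L_\beta(\cdot;\lambda)\), and a global dual error bound from the strong convexity of \(F^*\) and \(G^*\) (moduli \(1/L_F\) and \(1/L_G\)) together with full row rank of \([A\ B]\). The paper packages that last step as strong convexity of the Moreau envelope \(-d_\beta\), so the bound is global and your bounded-level-set fallback is never needed. The only other difference is cosmetic: you index the potential at \((u^k,\lambda^k)\) rather than the paper's \((u^{k+1},\lambda^k)\). That still closes once you linearize \(d_\beta\) at \(\lambda^k\) and use its \(1/\beta\)-smoothness to absorb the extra \(O(\tau^2\|r^{k+1}\|^2)\) term for small \(\tau\).
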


\begin{proof}
Let \(u=(x,y,z)\), let \(\bar u(\lambda)\) minimize the augmented
Lagrangian at fixed \(\lambda\), and set
\(d_\beta(\lambda)=\min_u\mathcal L_\beta(u;\lambda)\).  Consider
\[
  V^k=
  \mathcal L_\beta(u^{k+1};\lambda^k)-d_\beta(\lambda^k)
  +d_\beta(\lambda^\star)-d_\beta(\lambda^k),
\]
and write
\(S_k=\|u^{k+1}-u^k\|^2\) and
\(R_k=\|\nabla d_\beta(\lambda^k)\|^2\).
Combining the Hong--Luo primal--dual gap calculation with the global primal
and dual error bounds verified here for the noncompact slack-last model gives
finite positive constants
\(\gamma,C,C_p,C_d\), depending only on the problem data and \(\beta\), such
that
\begin{equation}
\label{eq:gap-descent-overview}
  V^k-V^{k-1}
  \leq-(\gamma-\tau C)S_k-\tau R_k,
  \qquad
  V^k\leq C_pS_k+C_dR_k.
\end{equation}
Hence \(0<\tau<\bar\tau:=\gamma/C\) implies
\[
  V^k\leq\rho V^{k-1},\qquad
  \rho=\left(1+
  \min\{(\gamma-\tau C)/C_p,\tau/C_d\}\right)^{-1}<1.
\]
The iterate error is bounded by a constant multiple of \(V^k\), which gives
R-linear convergence.  The full derivation is given in
Appendix~\ref{app:general-small-dual-step-proof}.
\end{proof}

The threshold in Proposition~\ref{prop:general-small-dual-step} cannot be
chosen independently of the problem; the obstruction already occurs in the
strongly convex quadratic subclass.  With
\(\vartheta=\tau/\beta\), the following result gives a quantitative
obstruction.

\begin{theorem}[No problem-independent positive relative-step interval]
\label{thm:no-universal-dual-step}
For every
\[
  0<\vartheta\leq\frac{80}{119},
\]
there is an \(m=3\) slack-last QP with \(Q_1,Q_2\succ0\) and
\([A\ B]\) of full row rank, together with a finite feasible
initialization, for which direct \(x\to y\to z\to\lambda\) ADMM with
relative multiplier step \(\vartheta\) generates a bounded nonconvergent
sequence.  Consequently, there is no \(\bar\vartheta>0\) such that, for every problem in the class, the method converges for every \(0<\vartheta<\bar\vartheta\).
\end{theorem}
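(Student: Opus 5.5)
The plan is a scaling reduction followed by a rescaled family of certified cycles, so that every relative step \(\vartheta\in(0,80/119]\) is covered. First I reduce to \(\beta=1\). Substituting \(\lambda=\beta\tilde\lambda\) and \(Q_i=\beta\tilde Q_i\) turns the relaxed iteration with penalty \(\beta\) and dual step \(\tau\) into the same iteration with penalty \(1\) and dual step \(\vartheta=\tau/\beta\). Hence it suffices, for each \(\vartheta\) in the range, to produce one QP with \(\beta=1\).

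The core idea is to find a structure in which \(\vartheta\) enters only through a rescaling of the data, rather than running a separate certificate for each \(\vartheta\). With \(A=B=I_3\) (so \([A\ B]\) has full row rank) and \(\beta=1\), the relaxed update gives
\[
\lambda^{k+1}=\lambda^k-\vartheta\,(x^{k+1}+y^{k+1}+z^{k+1}-b).
\]
Writing \(w=b-x^{k+1}-y^{k+1}+\lambda^k\), the projection step gives \(z^{k+1}=[w]_+\) and \(\lambda^{k+1}=(1-\vartheta)\lambda^k+\vartheta[w]_-\). Unlike the \(\tau=\beta\) case, \(\lambda\) is no longer a pure function of \(q\). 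The state must therefore be taken as \((y,z,\lambda)\), and each strict sign pattern \(D\) again yields an affine branch map \(T_D(\vartheta)\) as in \eqref{eq:branch-map}, now depending polynomially on \(\vartheta\).

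I would handle the family in two regimes.

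\textbf{Regime (a): \(\vartheta\) bounded away from zero.} Here \(\vartheta\in[\vartheta_0,80/119]\), and I would use finitely many certified cycles. For a fixed itinerary word \(\mathcal W\), the return map \(L_{\mathcal W}(\vartheta)\) is rational in \(\vartheta\). Its fixed point \(s^0(\vartheta)\) is then rational wherever \(\det(I-\mathcal P(\vartheta))\neq0\), and strict sign margins persist on a neighborhood; this is the robustness idea of Corollary~\ref{cor:robustness}, applied to the parameter \(\vartheta\). A compactness cover of \([\vartheta_0,80/119]\) by such neighborhoods, each with an explicit instance, handles this regime.

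\textbf{Regime (b): \(\vartheta\to0\).} Here I would use a problem-dependent time rescaling. Take \(Q_1=Q_2=\kappa I\)-type data (or a \(3\times3\) analogue of \eqref{eq:MNQ-data}) with \(\kappa\) tied to \(\vartheta\). The aim is to make the primal substeps nearly exact minimizations, with the slack direction decoupled, so that the slow dual dynamics become a discretized ODE with step \(\vartheta\). The three-dimensional slack is what should allow a rotational, non-monotone limit.

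Concretely, I expect to exhibit a family \(Q_i(\vartheta)\) such that the iteration, restricted to a fixed sign pattern, is conjugate to a map whose linear part has a complex pair of modulus \(\ge1\) for all small \(\vartheta\). Combining this with a resetting pattern, as in the \((00)^2(01)^{64}\) mechanism, should give a bounded nonconvergent orbit. The boundedness and non-KKT conclusions then follow exactly as in the proof of Theorem~\ref{thm:main}: the fixed point of the composed affine return map is a periodic orbit avoiding the unique KKT fixed point.

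The final claim of the theorem follows by taking any candidate \(\bar\vartheta>0\), picking \(\vartheta<\min\{\bar\vartheta,80/119\}\), and applying the construction.

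The main obstacle is regime (b), that is, uniformity as \(\vartheta\to0\). Proposition~\ref{prop:general-small-dual-step} says that a fixed instance converges for small steps, so the instances must degenerate, for example through growing condition numbers of \(Q_i\). The branch admissibility margins must also be certified along that whole degenerating family, not just at finitely many points. My first attempt would be an explicit scaling \(Q_i=\vartheta^{-1}\hat Q_i\) or \(Q_i=\vartheta\hat Q_i\), chosen so that \(T_D(\vartheta)\) becomes \(\vartheta\)-independent after a diagonal change of variables. If that works, a single exact certificate, like the period-\(66\) one, would cover all of \((0,80/119]\). The endpoint \(80/119\) would then be the largest value for which that fixed certificate's sign margins remain positive.
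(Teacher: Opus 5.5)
Your proposal has a genuine gap: it never produces, for any single \(\vartheta\in(0,80/119]\), an instance with a certified nonconvergent orbit. In regime (a), the persistence argument of Corollary~\ref{cor:robustness} only carries an already-certified cycle to nearby parameter values; it does not supply one. A compactness cover also presupposes pointwise existence on \([\vartheta_0,80/119]\), which is the theorem itself. The only certified cycle available is the period-\(66\) one at \(\vartheta=\tau/\beta=1\), which lies outside the range. Moreover, Theorem~\ref{thm:relaxation} shows that instance behaves very differently for smaller steps: its KKT point is locally attracting for every \(\tau<\tau_c\approx0.9366\), and the period-\(66\) initialization converges near \(\tau=1/2\). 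Regime (b) is a heuristic program: an ODE limit, a rotation-plus-reset mechanism, and a hoped-for diagonal conjugacy making \(T_D(\vartheta)\) independent of \(\vartheta\). It comes with no explicit family, no spectral computation, and no uniform branch margin. You correctly flag it as the main obstacle, but it is the whole content of the theorem, and your account of where \(80/119\) comes from is not backed by any computation.

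The missing idea is that no periodic cycle or return-map certificate is needed. The paper works at the KKT point itself. With a strictly complementary KKT point, ADMM coincides near it with the linear error map \(U_D\) of the single branch \(D=\diag(1,0,0)\), so it suffices to make that branch marginally stable. For each fixed \(\vartheta\), the paper builds a continuous family \(\alpha\mapsto(A_\alpha,B_\alpha,Q_{1,\alpha},Q_{2,\alpha})\), \(\alpha\in[0,\sqrt2-1]\), normalized by \(Q_{1,\alpha}+A_\alpha^\top A_\alpha=Q_{2,\alpha}+B_\alpha^\top B_\alpha=I\). Its geometry is tied to \(\vartheta\) through \(r=\sqrt{7\vartheta/(80-7\vartheta)}\). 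At the degenerate endpoint \(\alpha=0\), a Schur--Cohn recursion with explicit polynomial sign conditions certifies \(\rho(U_D(r,0))>1\) uniformly for \(0<r\le1/4\); the endpoint \(80/119\) is simply \(r=1/4\). At \(\alpha=\sqrt2-1\) the coordinates decouple, and a scalar Jury test gives strict Schur stability. Continuity of the spectral radius then yields \(\alpha_c\) with \(\rho(U_D(r,\alpha_c))=1\) at a strongly convex, full-row-rank instance. Uniqueness of the KKT point excludes the eigenvalue \(+1\). A sufficiently small genuine eigenmode for \(-1\) or for a complex unit pair is then a bounded, nonconvergent raw ADMM orbit that never leaves the branch, and a feasible \(x^0\) is recovered from nonsingularity of \(A_{\alpha_c}\). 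One parametric algebraic certificate thus covers the entire range, including \(\vartheta\to0\). The problem dependence forced by Proposition~\ref{prop:general-small-dual-step} is automatic, because the instance is selected per \(\vartheta\) by the intermediate value theorem rather than by explicit asymptotics.
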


\begin{proof}
Fix \(\vartheta\).  Appendices~\ref{app:no-universal-dual-step-proof}
and~\ref{app:universal-step-obstruction} construct a continuous
one-parameter branch matrix \(U(\alpha):=U_D(r,\alpha)\), where
\(D=\diag(1,0,0)\) and
\(r=\sqrt{7\vartheta/(80-7\vartheta)}\).  The branch is unstable at the
degenerate endpoint \(\alpha=0\), while the other endpoint
\(\alpha=\sqrt2-1\) is strictly stable.  Continuity gives an interior
\(\alpha_c\) with \(\rho(U(\alpha_c))=1\); at this parameter the associated
problem is already strongly convex and \([A\ B]\) has full row rank.
Uniqueness of the KKT point excludes the eigenvalue \(+1\), so the remaining
unit-circle mode generates a bounded nonconvergent error sequence.
Finally, the strict projection margin keeps every sufficiently small
perturbation on the same affine branch; the appendix also constructs a
feasible initialization and verifies boundedness of the full ADMM orbit.
\end{proof}

When \(\bar\tau\) is unavailable, the same global error bounds yield an
exact look-ahead backtracking rule with one additional trial primal sweep.
We make no model-independent novelty claim for this rule; it is recorded as a
consequence of the slack-last bounds rather than a primary contribution.

\begin{corollary}[General adaptive dual step with backtracking]
\label{thm:adaptive-dual-step}
Under the assumptions of Proposition~\ref{prop:general-small-dual-step},
write \(u=(x,y,z)\), \(E=[A,B,I_m]\), and set
\[
\begin{aligned}
  \kappa_p&:=
    \frac{1+\max\{L_F,L_G\}+\beta\|E\|^2}
    {\lambda_{\min}\!\left(
      \diag(\mu_FI,\mu_GI,0)+\beta E^\top E\right)},\\
  \operatorname{PG}_\lambda(u)
    &:=u-\operatorname{prox}_h\!\left(u-\nabla s_\lambda(u)\right),
    \qquad B_E(v):=\|E\|^2\kappa_p^2\|v\|^2,
\end{aligned}
\]
where \(h(u)=\iota_{\R_+^m}(z)\) and \(s_\lambda\) is the smooth
part of \(\mathcal L_\beta(\cdot;\lambda)\), while
\(\mathcal G_\lambda\) denotes one exact \(x\to y\to z\)
Gauss--Seidel primal sweep at fixed multiplier.
Fix \(\chi,\delta\in(0,1)\) and \(\tau_{\max}>0\).
Algorithm~\ref{alg:adaptive-dual-step} accepts a trial only when its
\((\widehat v,\widehat D)\) satisfy
\begin{equation}
\label{eq:adaptive-gate}
  \tau B_E(\widehat v)\leq(1-\chi)\widehat D.
\end{equation}
Under exact block solves and exact evaluation of the acceptance test,
backtracking terminates finitely, the accepted steps have a positive lower
bound independent of \(k\), and
\(\{(u^{k+1},\lambda^k)\}_{k\geq1}\) converges globally R-linearly.
\end{corollary}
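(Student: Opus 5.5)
The idea is to treat the look-ahead rule as an adaptive version of the small-step argument behind Proposition~\ref{prop:general-small-dual-step}. We keep the Lyapunov function \(V^k\) and the two inequalities in \eqref{eq:gap-descent-overview}. The gate \eqref{eq:adaptive-gate} is designed so that, once a trial is accepted, it certifies the descent that the condition \(\tau<\bar\tau=\gamma/C\) gave in the fixed-step proof.

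\textbf{Step 1: the gate quantities.} I take \(\widehat v\) to be the trial residual \(E\widehat u-b\), where \(\widehat u=\mathcal G_{\lambda_{\rm trial}}(u)\) is the trial primal sweep at \(\lambda_{\rm trial}=\lambda-\tau(Eu-b)\). I take \(\widehat D\) to be the sufficient-decrease quantity of one Gauss--Seidel sweep, of order \(\gamma\|\widehat u-u\|^2\). This is exact, because each block subproblem is strongly convex in its own variable; for the \(z\)-block this uses the \(\beta\|z\|^2\) term. The constant \(\kappa_p\) is the global primal error-bound constant: the smooth part \(s_\lambda\) is strongly convex on \(u\) with modulus \(\lambda_{\min}(\diag(\mu_F I,\mu_G I,0)+\beta E^\top E)\), which is positive by full row rank, and its gradient is Lipschitz with constant at most \(\max\{L_F,L_G\}+\beta\|E\|^2\).

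Standard prox-gradient error-bound reasoning then gives \(\|u-\bar u(\lambda)\|\le\kappa_p\|\operatorname{PG}_\lambda(u)\|\). It follows that \(\tau B_E(\widehat v)\) upper-bounds the term \(\tau C S_k\) that the multiplier change injects into the primal gap. So acceptance implies \(V^k-V^{k-1}\le-\chi\widehat D-\tau R_k\), which is the descent inequality with \(\gamma-\tau C\) replaced by \(\chi\gamma\).

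\textbf{Step 2: finite termination and a uniform lower bound.} I would show the gate holds automatically for every \(\tau\le\tau_\ast:=(1-\chi)\gamma/(\|E\|^2\kappa_p^2 c)\), where \(c\) is a data-dependent constant bounding \(\|\widehat v\|^2\) by a multiple of \(\widehat D\) through the error bound. Backtracking from \(\tau_{\max}\) by the factor \(\delta\) therefore stops after at most \(\lceil\log(\tau_\ast/\tau_{\max})/\log\delta\rceil\) trials. Every accepted step satisfies \(\tau_k\ge\min\{\tau_{\max},\delta\tau_\ast\}\), which does not depend on \(k\).

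\textbf{Step 3: linear rate.} Insert the accepted steps into the second inequality of \eqref{eq:gap-descent-overview}, \(V^k\le C_pS_k+C_dR_k\). With \(\tau_k\in[\tau_{\min},\tau_{\max}\wedge\tau_\ast]\) this gives \(V^k\le\rho V^{k-1}\), where \(\rho=(1+\min\{\chi\gamma/C_p,\tau_{\min}/C_d\})^{-1}<1\). As in the proposition, the iterate error \(\|(u^{k+1},\lambda^k)-(u^\star,\lambda^\star)\|^2\) is dominated by \(V^k\), which yields R-linear convergence. The dual error bound \(\|\lambda-\lambda^\star\|\lesssim\|\nabla d_\beta(\lambda)\|\) is needed there; it holds globally because \(d_\beta\) is strongly concave on the range of \(E\), which is all of \(\R^m\) thanks to the identity block.

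\textbf{Main obstacle.} The hard step is Step 1: showing that the computable quantity \(B_E(\widehat v)\), built from the trial residual, really dominates the dual-induced increase of the primal gap, uniformly and without compactness. The gate must bound a quantity at iteration \(k\) using only look-ahead information. My first attempt would bound \(\|\lambda^{k+1}-\lambda^k\|=\tau\|Eu^{k+1}-b\|\) and then the perturbation \(\|\bar u(\lambda^{k+1})-\bar u(\lambda^k)\|\le\|E\|\kappa_p\|\lambda^{k+1}-\lambda^k\|\) by Lipschitz continuity of \(\bar u(\cdot)\). After that I would check, through the Hong--Luo gap identity, that the cross term collapses exactly to \(\tau B_E\).
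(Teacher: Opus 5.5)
\textbf{The gap is in what $\widehat v$ is.} Your Step~1 uses the wrong gate quantity. In Algorithm~\ref{alg:adaptive-dual-step}, $\widehat v=\operatorname{PG}_{\widehat\lambda}(u^k)$ is the proximal-gradient residual at the \emph{current} point $u^k$ under the \emph{trial} multiplier. It is not the constraint residual $E\widehat u-b$. That is what gives $B_E(v)=\|E\|^2\kappa_p^2\|v\|^2$ its meaning, since $\kappa_p$ is exactly the constant in $\|u-\bar u(\lambda)\|\le\kappa_p\|\operatorname{PG}_\lambda(u)\|$.

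With your choice, the sentence ``it follows that $\tau B_E(\widehat v)$ upper-bounds\ldots'' does not follow, because the error bound involves $\operatorname{PG}$, not $E\widehat u-b$. Step~2 is also false, because the constraint residual does not vanish when the sweep stalls. For instance, take $u^{\rm init}=\bar u(\lambda^0)$ with $\lambda^0\neq\lambda^\star$. Then $u^1=\bar u(\lambda^0)$ and $r^1=Eu^1-b=-\nabla d_\beta(\lambda^0)\neq0$. As $\tau\to0$, your left-hand side is of order $\tau\|r^1\|^2$ while $\widehat D=O(\tau^2)$. So all sufficiently small trial steps are \emph{rejected}, no constant $c$ with $\|\widehat v\|^2\le c\,\widehat D$ exists, and finite termination is not guaranteed. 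With the correct $\widehat v$, Step~2 is immediate from the sweep estimates in the proof of Proposition~\ref{prop:general-small-dual-step}: $\widehat D\ge\gamma\|\widehat u-u^k\|^2$ and $\|\widehat v\|\le\sigma\|\widehat u-u^k\|$, so $c=\sigma^2$.

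The step you flag as the main obstacle is not resolved by a Lipschitz estimate on $\lambda\mapsto\bar u(\lambda)$ and a collapsing cross term. The mechanism is an exact one-step expansion. Write $r^k=Eu^k-b$, $\lambda^{k-1}-\lambda^k=\tau_kr^k$, $\bar u^k=\bar u(\lambda^k)$ and $D_k=\mathcal L_\beta(u^k;\lambda^k)-\mathcal L_\beta(u^{k+1};\lambda^k)$. Then $V^k-V^{k-1}=-D_k+\tau_k\|r^k\|^2-2\bigl(d_\beta(\lambda^k)-d_\beta(\lambda^{k-1})\bigr)$. Concavity of $d_\beta$ and $\nabla d_\beta(\lambda)=b-E\bar u(\lambda)$ give $d_\beta(\lambda^k)-d_\beta(\lambda^{k-1})\ge\tau_k\langle E\bar u^k-b,r^k\rangle$. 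Completing the square yields
\[
  V^k-V^{k-1}\le\tau_k\|E(u^k-\bar u^k)\|^2-D_k-\tau_k\|\nabla d_\beta(\lambda^k)\|^2 .
\]
The error bound applied at $u^k$ with the accepted multiplier $\lambda^k=\widehat\lambda$ gives $\|E(u^k-\bar u^k)\|^2\le B_E(\widehat v)$, and the gate turns the first two terms into $-\chi D_k$.

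Note the ordering $\|E(u^k-\bar u^k)\|^2\le B_E(\widehat v)\le\|E\|^2\kappa_p^2\sigma^2S_k$. Your statement that $\tau B_E(\widehat v)$ upper-bounds $\tau CS_k$ is backwards. The gate need only dominate the smaller error term, and the bound by $CS_k$ is what lets small steps pass.

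Your Step~3 then works as written: only $\tau_k\ge\min\{\tau_{\max},\delta\tau_\ast\}$ is used, and accepted steps need not satisfy $\tau_k\le\tau_\ast$. However, the global dual error bound does not follow from $E$ being onto. With $A=B=0$, $E$ is onto, yet $d_\beta$ is affine on a translated orthant. The bound comes from the strong convexity of $F^*,G^*$ together with full row rank of $[A\ B]$, as in the proof of Proposition~\ref{prop:general-small-dual-step}.
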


\begin{algorithm}[H]
\caption{Backtracking dual-step ADMM with an exact trial primal sweep}
\label{alg:adaptive-dual-step}
\begin{algorithmic}[1]
\Require
  \(u^{\mathrm{init}}
    =(x^{\mathrm{init}},y^{\mathrm{init}},z^{\mathrm{init}})\)
  with \(z^{\mathrm{init}}\in\R_+^m\),
  \(\lambda^0\in\R^m\);
  \(\chi,\delta\in(0,1)\), \(\tau_{\max}>0\)
\State \(u^1\gets\mathcal G_{\lambda^0}(u^{\mathrm{init}})\)
\For{\(k=1,2,\ldots\)}
  \State \(r^k\gets Eu^k-b\)
  \For{\(j=0,1,2,\ldots\)}
    \State \(\tau\gets\tau_{\max}\delta^j\);
      \(\widehat\lambda\gets\lambda^{k-1}-\tau r^k\);
      \(\widehat u\gets\mathcal G_{\widehat\lambda}(u^k)\)
    \State \(\widehat v\gets\operatorname{PG}_{\widehat\lambda}(u^k)\);
      \(\widehat D\gets
      \mathcal L_\beta(u^k;\widehat\lambda)
      -\mathcal L_\beta(\widehat u;\widehat\lambda)\)
    \If{\(\tau B_E(\widehat v)\leq(1-\chi)\widehat D\)}
      \State \((\tau_k,\lambda^k,u^{k+1})
        \gets(\tau,\widehat\lambda,\widehat u)\);
        \textbf{exit} inner loop
    \Else
      \State retain \((u^k,\lambda^{k-1})\)
    \EndIf
  \EndFor
\EndFor
\end{algorithmic}
\end{algorithm}

If
\[
  F(x)=\tfrac12x^\top Q_1x+c_1^\top x,\qquad
  G(y)=\tfrac12y^\top Q_2y+c_2^\top y,\qquad Q_1,Q_2\succ0,
\]
set
\[
  K=\diag(Q_1,Q_2,0)+\beta E^\top E,\qquad
  B_K(v):=\beta^{-1}v^\top(K^{-1}+2I+K)v.
\]
Then \(B_K(v)\) is also an upper bound for
\(\|E(u-\bar u(\lambda))\|^2\), so one may replace
\eqref{eq:adaptive-gate} by
\begin{equation}
\label{eq:adaptive-k-gate}
  \tau B_K(\widehat v)\leq(1-\chi)\widehat D,
\end{equation}
with the same finite-backtracking and convergence conclusions.  The proof of
Corollary~\ref{thm:adaptive-dual-step} and of this quadratic refinement is
given in Appendix~\ref{app:adaptive-dual-step-proof}.

\subsection{Stability of the Period-66 Example}

For the rational QP \eqref{eq:counterexample-qp}, set \(\beta=1\) and use
the six-dimensional essential state \(w=(y,z,\lambda)\in\R^6\).  On a strict projection branch
with mask \(D\), one complete update is affine:
\begin{equation}
\label{eq:relaxed-affine-map}
  w^+=T_D(\tau)w+a_D(\tau),
  \qquad T_D(\tau)=T_D^{(0)}+\tau T_D^{(1)}.
\end{equation}
Here \(\tau\) enters only the multiplier block row.  The explicit block
matrices are given in Appendix~\ref{app:period66-branch-map}.  At the unique
KKT point, \(q^\star=(-1,1)\), so the local branch is \(D_{01}\).
Figure~\ref{fig:tau-regimes} separates the three scopes below.

\begin{figure}[H]
\centering
\includegraphics[width=0.95\textwidth]{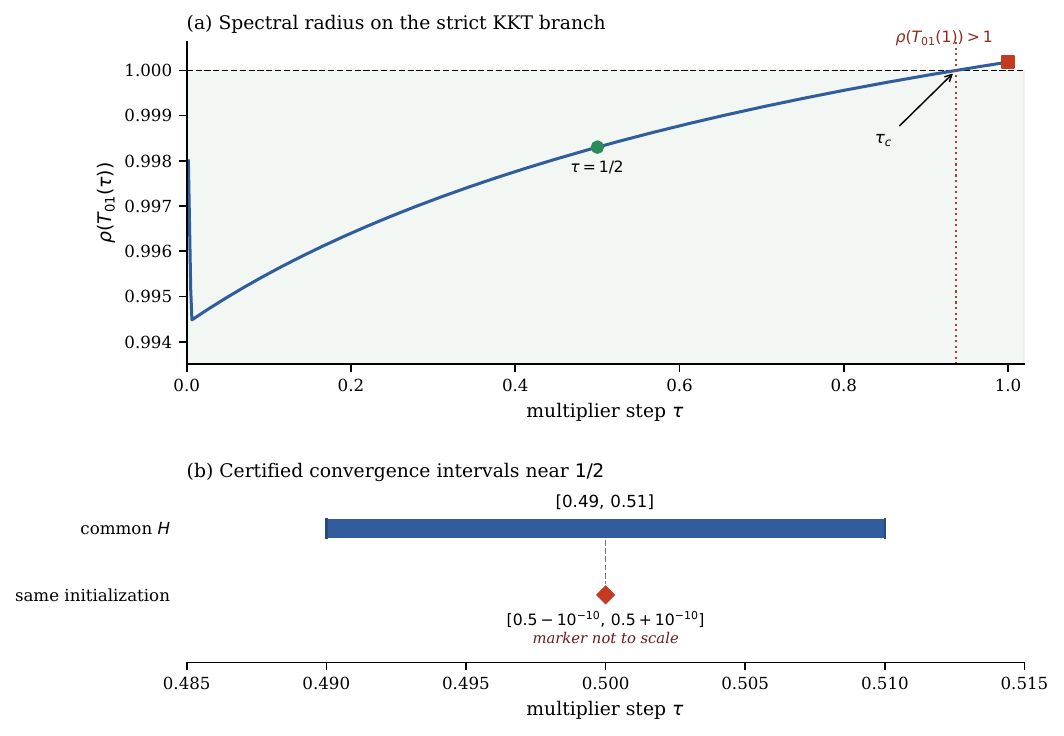}
\caption{Certified dual-step ranges for the fixed rational instance:
pointwise local attraction on the strict \(D_{01}\) branch, a uniform
common-Lyapunov interval, and convergence of the initialization in
Theorem~\ref{thm:main}.}
\label{fig:tau-regimes}
\end{figure}
For \(H\succ0\), write
\[
  \|e\|_H:=(e^\top He)^{1/2}.
\]
For a fixed \(\tau\), we say that the KKT state \(w^\star\) is
\emph{locally Q-linearly attracting} if there exist a neighborhood
\(\mathcal U_\tau\) of \(w^\star\), a norm \(\|\cdot\|_\tau\), and a constant
\(\kappa_\tau\in(0,1)\) such that every \(w^0\in\mathcal U_\tau\) remains in
\(\mathcal U_\tau\) and satisfies
\[
  \|w^{k+1}-w^\star\|_\tau
  \leq
  \kappa_\tau\|w^k-w^\star\|_\tau,
  \qquad k\geq0.
\]
\begin{theorem}[Dual step length for the period-\(66\) instance]
\label{thm:relaxation}
For the rational QP \eqref{eq:counterexample-qp} and
\eqref{eq:relaxed-update}:
\begin{enumerate}
    \item there exist a rational matrix \(H\succ0\), a radius \(r>0\), and a
  constant \(\kappa\in(0,1)\), all independent of
  \(\tau\in[49/100,51/100]\), such that
  \[
    \|w^0-w^\star\|_H\leq r
    \quad\Longrightarrow\quad
    \|w^{k+1}-w^\star\|_H
    \leq
    \kappa\|w^k-w^\star\|_H
  \]
  for every \(k\geq0\).  In particular, the KKT point is uniformly locally
  Q-linearly attracting throughout
  \[
    \frac{49}{100}\leq\tau\leq\frac{51}{100};
  \]
  \item the initialization of Theorem~\ref{thm:main} converges to that KKT
  point whenever
  \[
    \left|\tau-\frac12\right|\leq10^{-10};
  \]
    \item for each fixed \(\tau\in(0,1)\), the KKT state \(w^\star\) is locally
      Q-linearly attracting if and only if
      \[
        0<\tau<\tau_c,
      \]
      where
      \[
        0.9366061114<\tau_c<0.9366061115.
      \]
      Equivalently, \(T_{01}(\tau)\) is Schur stable exactly for
      \(0<\tau<\tau_c\).  For every fixed \(\tau\) in this interval, the
      neighborhood, Lyapunov metric, and contraction factor may depend on
      \(\tau\).
\end{enumerate}
\end{theorem}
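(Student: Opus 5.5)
The plan is to reduce all three parts to exact rational computations with the affine branch maps \eqref{eq:relaxed-affine-map}. At the KKT state we have \(q^\star=(-1,1)\), which is strictly inside the \(D_{01}\) region with margin \(1\). So there is a ball around \(w^\star\), of radius independent of \(\tau\) in a compact range, on which the iteration is exactly the linear error map \(e^+=T_{01}(\tau)e\). Uniformity in \(\tau\) is available because \(T_{01}(\tau)\) is affine in \(\tau\) and the one-step Lipschitz constants are uniformly bounded. Consequently, part~3 and part~1 become statements about the family of matrices \(T_{01}(\tau)=T_{01}^{(0)}+\tau T_{01}^{(1)}\). A norm-ball neighborhood in the Lyapunov metric \(H\) is invariant once contraction holds. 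This gives the invariance required in the definition of local Q-linear attraction, as long as the radius \(r\) keeps every point of the \(H\)-ball inside the strict \(D_{01}\) region, which is a computable condition \(r\le \|\cdot\|\)-margin\(/\|H^{-1/2}\|\).

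For part~1, I would guess a rational \(H\): take the discrete Lyapunov solution for \(\tau=1/2\) and round it to rationals. I would then certify the two endpoint LMIs \(H-T_{01}(\tau)^\top H T_{01}(\tau)\succeq (1-\kappa^2)H\) at \(\tau=49/100\) and \(\tau=51/100\), checking positive definiteness by exact rational \(LDL^\top\) or Sylvester minors. The difficulty is that the condition is quadratic in \(\tau\), so endpoint checks alone do not cover the interval. I would write \(H-T^\top HT-(1-\kappa^2)H=P_0+\tau P_1+\tau^2P_2\). Then I would either verify \(P_2\preceq0\), which makes the expression matrix-concave in \(\tau\) so that the endpoints suffice, or use a Schur-complement lift that is linear in \(\tau\), namely \(\begin{pmatrix}\kappa^2H & T^\top H\\ HT & H\end{pmatrix}\succeq0\). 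This lifted matrix is affine in \(\tau\), so convexity reduces the whole interval to the two endpoints. I prefer the lift.

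For part~3, local Q-linear attraction on a branch-invariant neighborhood is equivalent to Schur stability of \(T_{01}(\tau)\). Sufficiency comes from the Lyapunov norm. For necessity, an eigenvalue of modulus \(\ge1\) yields arbitrarily small perturbations that stay on the branch and do not contract in any norm; an eigenvalue of modulus exactly \(1\) still rules out a uniform factor \(\kappa<1\). So I need the exact stability interval of the one-parameter family. I would compute \(\chi_\tau(s)=\det(sI-T_{01}(\tau))\) over \(\Q[\tau]\) and apply the Schur–Cohn/Jury criterion (equivalently, map to the Hurwitz setting by a bilinear transform and use Routh–Hurwitz). This gives finitely many polynomial inequalities in \(\tau\). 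The boundary points are roots of resultants, specifically \(\chi_\tau(1)\), \(\chi_\tau(-1)\), and the resultant detecting a complex pair on the unit circle, i.e. the determinant of the bialternate product \(T\odot T-I\). Then the steps are: show by exact evaluation at one \(\tau\) per cell that stability holds on \((0,\tau_c)\) and fails on \((\tau_c,1)\); confirm \(\chi_\tau(\pm1)\ne0\) on \((0,1)\), since uniqueness of the KKT point excludes \(+1\) anyway; and isolate \(\tau_c\) by Sturm sequences to the stated rational bracket. For small \(\tau\), stability near \(0\) needs care because \(T_{01}(0)\) may have unit eigenvalues (the multiplier is frozen). I would handle this either by perturbation of the eigenvalues at \(\tau=0\), where the derivative should point inward, or simply through the Sturm count showing no boundary root in \((0,\tau_c)\). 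I expect this step, locating exactly one crossing and certifying its isolation, to be the main obstacle.

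For part~2, I would run the exact rational iteration from the Theorem~\ref{thm:main} initialization with symbolic \(\tau=1/2+t\), or with interval arithmetic over \(|t|\le10^{-10}\), for finitely many steps \(K\). Each step needs a strict sign margin so that the branch sequence is the same for every \(t\) in the interval. At step \(K\), I would verify that \(w^K\) lies in the part-1 ball \(\|w^K-w^\star\|_H\le r\), which is uniform over \(\tau\in[49/100,51/100]\). Part~1 then gives convergence for all such \(\tau\).
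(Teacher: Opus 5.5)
Your proposal is correct and follows essentially the same route as the paper: a rational Lyapunov matrix from $\tau=1/2$, exact endpoint tests at $49/100$ and $51/100$ extended across the interval by matrix concavity, a strict-sign exact interval enclosure over $|\tau-1/2|\le10^{-10}$ that lands in the $\tau$-independent projection-safe ellipsoid, and a Schur--Cohn/Sturm analysis of the $D_{01}$ characteristic polynomial. Your quadratic coefficient $P_2=-(T_{01}^{(1)})^\top H\,T_{01}^{(1)}$ is automatically $\preceq0$, which is exactly the paper's chord identity, so the Schur-complement lift is an equivalent but unnecessary alternative; as in the paper, sufficiency in the third part comes from the $\tau$-dependent Lyapunov metric on the strict branch, and necessity from the fact that local Q-linear contraction on the linear branch forces $\rho(T_{01}(\tau))<1$.
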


\begin{proof}
At \(\tau=1/2\), an exact rational Lyapunov equation produces a common metric.
Exact endpoint tests and a quadratic chord identity extend contraction to
\([0.49,0.51]\); strict complementarity supplies a projection-safe
neighborhood.  For the specified period-\(66\) initialization, an exact
\(232\)-step interval enclosure for
\(|\tau-1/2|\leq10^{-10}\) enters that neighborhood.  Finally, exact
Schur recursion and a Sturm count identify the unique boundary
\(\tau_c\); inside the strict \(D_{01}\) branch, the resulting Lyapunov
metric and the Banach fixed-point theorem give local linear convergence.
The exact derivation, polynomial, and verification commands are in
Appendices~\ref{app:period66-dual-step-proof}
and~\ref{app:period66-dual-step-certificate}.
\end{proof}

The quantifiers in the three assertions are intentionally different.  The
first is uniform in \(\tau\) but local in the initial state; the second
concerns only the initial state used in Theorem~\ref{thm:main}; and the third
is a pointwise local branch criterion whose metric and neighborhood may depend
on \(\tau\).  Only the second assertion says anything about the former
period-\(66\) initialization.  None of the three assertions gives
arbitrary-initial global convergence over the displayed fixed-instance
ranges.  Proposition~\ref{prop:general-small-dual-step} has a fourth,
class-level quantifier: for every fixed problem satisfying its assumptions,
there is a problem-dependent \(\bar\tau\) that works from every finite
initialization.

\section{A Period-23 Attracting Cycle}
\label{sec:route-comparison}

This section presents a separate Kimi Code K3 route that was not supplied with
the period-\(66\) candidate or its certificate.

\subsection{Exact Period-23 Certificate}
\label{sec:k3-route}

The route received the same mathematical question without the Codex matrices,
projection word, initialization, certificate, or project-specific route
guidance (Appendix~\ref{app:provenance-init}).  It produced an \(m=3\)
non-KKT sequence of minimal period \(23\).  More strongly,
Proposition~\ref{prop:k3-period23} certifies an open invariant ellipsoid of
reduced initializations attracted phasewise to that sequence.  All \(69\)
projection inputs have common strict margin greater than \(1/250\), and an
exact Lyapunov inequality certifies local attraction.  The retained run record
and starting condition are summarized in
Appendix~\ref{app:discovery-provenance}.

The route reduced each fixed projection branch to an affine map on
\(v=(y,t)\), \(t=z+\lambda\), screened projection words with KKT-compatible
QP data, and promoted a candidate only after exact rational replay.  An exact
Lyapunov search then certified local attraction.

Consider the rational \(m=3\) QP with \(\beta=1\) and
\begin{equation}
\label{eq:k3-rational-data}
\begin{gathered}
\widehat A=
\begin{bmatrix}
 5/36&1/60&25/97\\
 1/60&5/77&5/91\\
 25/97&5/91&11/12
\end{bmatrix},
\qquad
\widehat B=
\begin{bmatrix}
 4/61&1/54&5/83\\
 1/54&13/92&23/88\\
 5/83&23/88&53/58
\end{bmatrix},\\[4pt]
\widehat Q_x=
\begin{bmatrix}
 85/93&-1/57&-3/11\\
 -1/57&99/100&-5/86\\
 -3/11&-5/86&7/78
\end{bmatrix},
\qquad
\widehat Q_y=
\begin{bmatrix}
 99/100&-1/51&-3/47\\
 -1/51&72/79&-18/65\\
 -3/47&-18/65&4/43
\end{bmatrix},\\[4pt]
\widehat b=
\begin{bmatrix}-2/17\\-1/14\\56/73\end{bmatrix},
\qquad
\widehat c_1=
\begin{bmatrix}33/98\\-33/25\\-24/55\end{bmatrix},
\qquad
\widehat c_2=
\begin{bmatrix}-31/99\\19/36\\-11/20\end{bmatrix}.
\end{gathered}
\end{equation}
Composing the \(23\) branch maps of word \(\mathcal W_{23}\) yields the return
map \(\Phi_{\rm per}(v)=M_{\rm per}v+c_{\rm per}\) \eqref{eq:k3-return-map}
with exact rational fixed point \(\widehat v^0\in\Q^6\).  The certified
projection-sign word is
\begin{equation}
\label{eq:k3-sign-word}
  \mathcal W_{23}
  =(+,-,+)^5\,(-,+,+)^7\,(-,-,+)^2\,(-,-,-)\,(-,-,+)^8 .
\end{equation}

\begin{proposition}[Exact period-\(23\) certificate for nearby initializations]
\label{prop:k3-period23}
For the rational data \eqref{eq:k3-rational-data},
\(\widehat Q_x,\widehat Q_y\succ0\) and
\(\det(\widehat A)\det(\widehat B)\neq0\).  The direct iteration
\eqref{eq:direct-admm}, with \(\beta=1\), applied to
\begin{equation}
\label{eq:k3-counterexample}
  \begin{aligned}
    \min_{x,y,z\in\R^3}\quad
    &\frac12x^\top\widehat Q_x x+\widehat c_1^\top x
      +\frac12y^\top\widehat Q_y y+\widehat c_2^\top y
      +\ind_{\R_+^3}(z)\\
    \text{s.t.}\quad
    &\widehat A x+\widehat B y+z=\widehat b,
  \end{aligned}
\end{equation}
has the following properties.
\begin{enumerate}[label=\textup{(\roman*)}]
  \item It has a unique primal--dual KKT point.

  \item Direct ADMM has a non-KKT sequence of minimal period \(23\).  Its
  strict projection-sign word is \eqref{eq:k3-sign-word}; its exponents
  denote consecutive repetitions and sum to \(23\).  All
  \(23\times3=69\) projection inputs are separated from zero:
  \[
    \min_{0\leq k<23}\min_{1\leq i\leq3}|\widehat t_i^k|
    >\frac1{250}.
  \]

  \item There is a rational matrix \(P\succ0\) for which
  \begin{equation}
  \label{eq:k3-certified-ellipsoid}
    \mathcal E_{\rm cert}
    :=\left\{\widehat v^0+e:e^\top Pe<\frac1{4000}\right\}
  \end{equation}
  is an open invariant set for the \(23\)-step return map
  \eqref{eq:k3-return-map}.  Every \(v^0\in\mathcal E_{\rm cert}\) follows
  the repeating pattern \(\mathcal W_{23}\) and converges phasewise to the
  period-\(23\) sequence.
\end{enumerate}
Consequently, the reconstructed ADMM sequence is nonconvergent for every
reduced initialization \(v^0\in\mathcal E_{\rm cert}\).
\end{proposition}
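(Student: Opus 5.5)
The plan mirrors the proof of Theorem~\ref{thm:main}, now carried out in dimension \(m=3\) with general (invertible) \(\widehat A,\widehat B\) and linear terms. I will proceed in four steps.

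\textbf{Step 1: exact algebraic data checks.} Verify \(\widehat Q_x,\widehat Q_y\succ0\) by Sylvester's criterion, computing the three leading principal minors of each in exact rational arithmetic. Compute \(\det\widehat A\) and \(\det\widehat B\) exactly and confirm both are nonzero.

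\textbf{Step 2: uniqueness of the KKT point, part (i).} Since \(\widehat A\) is invertible, \([\widehat A\ \widehat B]\) has full row rank. Strong convexity in \((x,y)\), together with the affine constraint and the closed convex orthant, gives a unique primal solution. Because \(\widehat A\) is invertible, the condition \(\widehat A^\top\lambda^\star=\widehat Q_xx^\star+\widehat c_1\) then pins down \(\lambda^\star\). Existence of a solution comes from coercivity: \(z=\widehat b-\widehat Ax-\widehat By\) is determined by \((x,y)\), so the problem is a strongly convex QP over a nonempty polyhedron. That polyhedron is nonempty because \(\widehat A\) is invertible, so we may take \(y=0\), \(z=0\), \(x=\widehat A^{-1}\widehat b\). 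Linear constraints give existence of the multiplier.

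\textbf{Step 3: reduced recurrence and the cycle, part (ii).} Derive the analogue of \eqref{eq:signed-recurrence} on \(v=(y,t)\) with \(t=z+\lambda\). The subproblem matrices are \(M=(\widehat Q_x+\widehat A^\top\widehat A)^{-1}\) and \(N=(\widehat Q_y+\widehat B^\top\widehat B)^{-1}\), and \([t]_+=Dt\), \(|t|=S_Dt\) on each strict sign branch. This produces affine branch maps \(v^+=T_Dv+h_D\), where \(h_D\) now also contains \(\widehat c_1,\widehat c_2\). Compose the \(23\) homogeneous lifts in the order of \(\mathcal W_{23}\) to get \(\Phi_{\rm per}(v)=M_{\rm per}v+c_{\rm per}\). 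Check \(\det(I_6-M_{\rm per})\neq0\) exactly and set \(\widehat v^0=(I_6-M_{\rm per})^{-1}c_{\rm per}\). Iterate exactly and confirm that every one of the \(69\) inputs \(\widehat t_i^k\) has the sign prescribed by \(\mathcal W_{23}\) with \(|\widehat t_i^k|>1/250\). Reconstruct \(x,z,\lambda\) as in \eqref{eq:raw-reconstruction}.

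Minimal period follows because \(23\) is prime and the sign word is not constant. Non-KKT follows because the unique KKT point is a fixed point of the single-valued iteration, so a genuine \(23\)-cycle cannot contain it.

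\textbf{Step 4: the invariant attracting ellipsoid, part (iii).} This is the main obstacle. First compute \(\rho(M_{\rm per})<1\), then solve the discrete Lyapunov equation \(M_{\rm per}^\top PM_{\rm per}-P=-I\) numerically. Round \(P\) to a rational matrix and certify exactly that \(P\succ0\) and \(P-M_{\rm per}^\top PM_{\rm per}\succ0\). The latter gives \(e^\top Pe<1/4000\Rightarrow (M_{\rm per}e)^\top P(M_{\rm per}e)\leq\kappa\, e^\top Pe\) with \(\kappa<1\), provided the orbit stays on the prescribed branches.

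To guarantee that, bound the deviation of each intermediate phase. For \(0\le j<23\), the \(j\)-th partial composition is linear in \(e\) with matrix \(M_j\). For every row \(r\) of \(M_j\) restricted to the \(t\)-coordinates, the worst case over the ellipsoid is \(\sqrt{r^\top P^{-1}r/4000}\), and each such bound must be below \(1/250\). To keep the arithmetic exact, verify the squared inequality \(r^\top P^{-1}r<4000/250^2\) rationally. With all branches admissible, \(\Phi_{\rm per}\) acts affinely on \(\mathcal E_{\rm cert}\), maps it strictly into itself, and contracts in the \(P\)-norm. Hence \(e_k\to0\) geometrically and every phase converges by continuity of the fixed branch maps, which gives phasewise convergence and the final nonconvergence claim.

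If the margin check fails for the initial choice of \(P\), I would reweight the Lyapunov equation, for example using a right-hand side built from \(\sum_j M_j^\top\Pi_t^\top\Pi_tM_j\), so that directions that move the \(t\)-coordinates are shrunk.
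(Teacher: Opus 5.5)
Your proposal is correct and follows essentially the same route as the paper: exact Sylvester and determinant checks, the reduced branch maps on $v=(y,t)$, the exact fixed point of the $23$-step return map with rational sign replay, and a rational Lyapunov matrix $P$ with $P-M_{\rm per}^\top PM_{\rm per}\succ0$, combined with the $P$-norm Cauchy--Schwarz bound $\sqrt{a_{ji}P^{-1}a_{ji}^\top/4000}$, to keep all $69$ signs fixed on $\mathcal E_{\rm cert}$. The differences are minor. You get minimality from the primality of $23$ and the nonconstant word, whereas the paper checks that the $23$ phase states are pairwise distinct. You also test each deviation against the uniform margin $1/250$ rather than the individual $|\widehat t_i^j|$; this is more conservative and may fail for a given $P$, but replacing $P$ by $cP$ with $c$ large always repairs it, since the statement only asserts that some such $P$ exists.
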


The proof, together with the reduced branch maps, the return-map
construction, and the invariant-ellipsoid data, is given in
Appendix~\ref{app:k3-period23}.  Because \(\widehat A\) and \(\widehat B\)
are nonsingular, the example converts to an all-identity instance.

\begin{corollary}[Identity-slack equivalence]
\label{cor:k3-identity-block-equivalence}
Since \(\widehat A\) and \(\widehat B\) are nonsingular, the change of
variables \(u=\widehat A x\), \(w=\widehat B y\) converts
\eqref{eq:k3-counterexample} into an equivalent rational strongly convex
\([I_3,I_3,I_3]\) identity-slack QP.  The transformed Hessians remain
positive definite, so Proposition~\ref{prop:k3-period23} yields a locally
attracting period-\(23\) counterexample in the all-identity model as well.
The change-of-variables formulas defining the equivalent rational
all-identity instance are given in
Appendix~\ref{app:k3-period23}.
\end{corollary}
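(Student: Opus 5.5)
The proof is a change of variables, followed by a check that direct ADMM commutes with it. Set \(u=\widehat A x\) and \(w=\widehat B y\). Since \(\det(\widehat A)\det(\widehat B)\neq0\) by Proposition~\ref{prop:k3-period23}, we have \(x=\widehat A^{-1}u\) and \(y=\widehat B^{-1}w\). Substituting into \eqref{eq:k3-counterexample} gives
\[
  \tfrac12u^\top\widetilde Q_u u+\widetilde c_1^\top u
  +\tfrac12w^\top\widetilde Q_w w+\widetilde c_2^\top w
  +\ind_{\R_+^3}(z)
  \quad\text{s.t.}\quad u+w+z=\widehat b,
\]
where \(\widetilde Q_u=\widehat A^{-\top}\widehat Q_x\widehat A^{-1}\), \(\widetilde c_1=\widehat A^{-\top}\widehat c_1\), and \(\widetilde Q_w,\widetilde c_2\) are defined analogously with \(\widehat B\). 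All entries are rational because the inverse of a rational nonsingular matrix is rational. The transformed Hessians are positive definite because they are congruences of \(\widehat Q_x,\widehat Q_y\succ0\) by nonsingular matrices. This settles the first claim: we obtain an equivalent rational strongly convex \([I_3,I_3,I_3]\) instance. KKT points correspond bijectively under \((x,y,z,\lambda)\mapsto(\widehat Ax,\widehat By,z,\lambda)\). Indeed, \(\widehat A^\top\lambda=\widehat Q_xx+\widehat c_1\) is equivalent to \(\lambda=\widetilde Q_uu+\widetilde c_1\), and the same holds for the \(y\)-block. Uniqueness of the KKT point therefore transfers from Proposition~\ref{prop:k3-period23}(i).

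The key step is to show that direct ADMM is equivariant under this map. At fixed \((y^k,z^k,\lambda^k)\), the \(x\)-subproblem of \(\mathcal L_\beta\) in the original problem depends on \(x\) only through \(F(x)\) and \(\widehat Ax\). Because \(x\mapsto\widehat Ax\) is a bijection, \(x^{k+1}\) is its unique minimizer exactly when \(u^{k+1}=\widehat Ax^{k+1}\) is the unique minimizer of the transformed \(u\)-subproblem at \((w^k,z^k,\lambda^k)\), where \(w^k=\widehat By^k\). The same argument gives \(w^{k+1}=\widehat By^{k+1}\). The \(z\)-step and the multiplier step involve only the residual \(\widehat Ax+\widehat By+z-\widehat b=u+w+z-\widehat b\), so they coincide exactly. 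Induction on \(k\) then shows that the transformed iterates are the images of the original iterates for every initialization. The step needing most care is this uniqueness/argmin correspondence, but it reduces to strong convexity of each subproblem, which both formulations share.

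Finally, transport the dynamical conclusions. The period-\(23\) non-KKT sequence maps to a period-\(23\) sequence in the new variables. The minimal period is preserved because the map is injective. The sequence remains non-KKT by the KKT bijection. The projection inputs are literally unchanged, so the sign word \(\mathcal W_{23}\) and the margin \(>1/250\) carry over. For local attraction, I will pull back the ellipsoid. The reduced state \(v=(y,t)\) maps linearly and invertibly to \((w,t)=(\widehat By,t)\). The image of \(\mathcal E_{\rm cert}\) is therefore an open ellipsoid, with matrix \(\diag(\widehat B^{-\top},I)P\diag(\widehat B^{-1},I)\succ0\), around the transformed fixed point. It is invariant for the conjugated return map, and phasewise convergence is preserved under linear conjugacy. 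This yields the locally attracting period-\(23\) counterexample in the all-identity model, and the explicit formulas can then be recorded in Appendix~\ref{app:k3-period23}.
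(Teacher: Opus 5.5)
Your proposal is correct and follows the paper's own proof. You use the same substitution \(u=\widehat Ax\), \(w=\widehat By\) and the same congruence formulas \(\widetilde Q_x=\widehat A^{-\top}\widehat Q_x\widehat A^{-1}\), \(\widetilde c_1=\widehat A^{-\top}\widehat c_1\) (analogously for \(y\)), and the same observation that the \(x\)- and \(y\)-subproblems are invertible reparameterizations while the residual, \(z\)-step and multiplier step are unchanged, so the trajectories correspond step by step. Your extra bookkeeping (the KKT bijection, preservation of minimal period and sign word, and the transformed ellipsoid matrix \(\diag(\widehat B^{-\top},I)P\diag(\widehat B^{-1},I)\)) just makes explicit what the paper leaves implicit.
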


Figure~\ref{fig:k3-period23-robustness} summarizes the certified word,
projection margin, invariant-ellipsoid slice, and spectrum of the
period map.

\begin{figure}[H]
\centering
\includegraphics[width=0.78\textwidth]{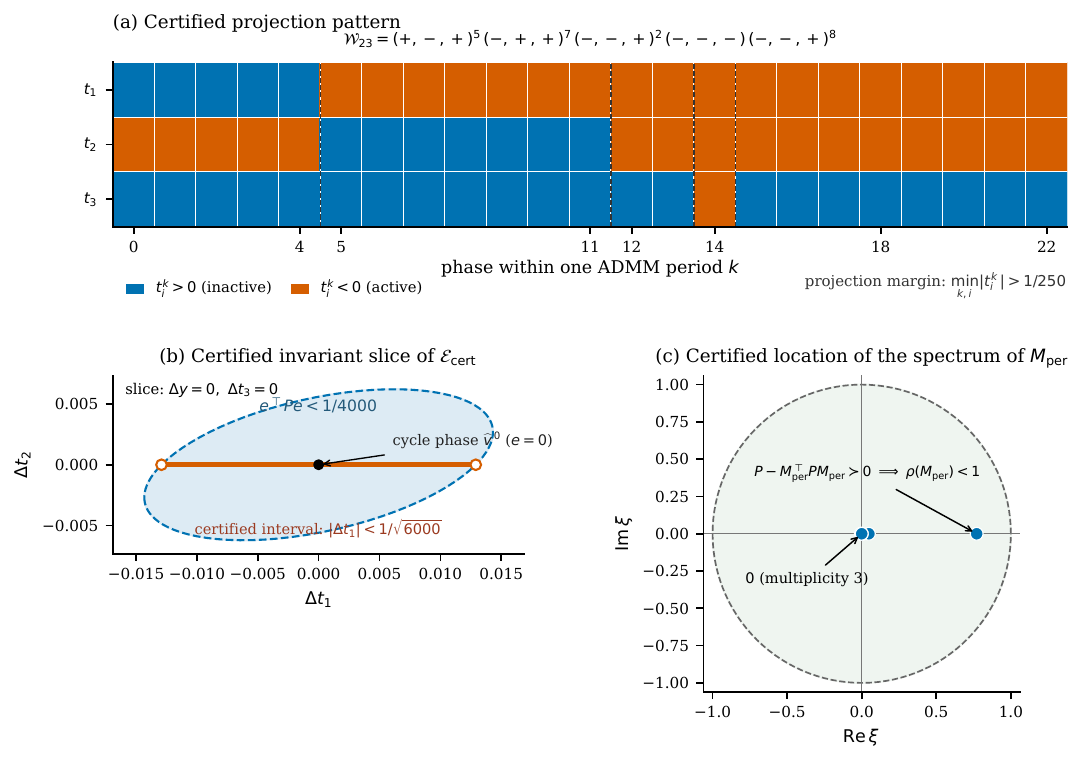}
\caption{Kimi period-\(23\) certificate: projection word and margin;
invariant-ellipsoid slice; spectrum of \(M_{\rm per}\).}
\label{fig:k3-period23-robustness}
\end{figure}

\subsection{Different Discovery Mechanisms of the Two AI-Assisted Routes}
\label{sec:ai-comparison}

This paper compares two research routes that actually occurred and were
fully recorded, rather than the intrinsic capabilities of two base models
under controlled conditions.  To describe the research environment beyond
the model, we write a \emph{research harness} as
\[
  H=(M,W,T,S,A,G,V),
\]
where \(M\) denotes the base model, \(W\) the versioned workspace, \(T\) the
tools, \(S\) the prompts, skills, and instructions, \(A\) the persistent
research artifacts, \(G\) the human decision and claim-promotion gates, and
\(V\) the exact verifiers.
Human direction is a control mechanism inside \(G\), not an independent
component outside \(H\).
A research \emph{route} is one concrete run of a harness configuration
together with the mathematical trajectory it produces.
The present \(H\) is used only to describe the research process for the
identity-slack question in this paper; it does not constitute a
general-purpose AI4Math platform.

The two routes address the same mathematical problem, but under different
harness configurations.
The Codex route ran in a long-maintained project workspace with access to
accumulated failure records, reduced representations, experiment scripts,
and exact certificates, and with iterative human steering.
The Kimi route ran in a frozen blind workspace without access to the
period-\(66\) candidate, certificate, or project history, and received only
a few continue-search instructions.
Both routes share the same evidence principles: floating-point trajectories
and model-generated derivations nominate candidates only; periodicity claims
must pass exact problem validation, strict branch admissibility, raw ADMM
replay, period closure, and minimal-period checks; stability claims further
require the corresponding Lyapunov, Schur, Sturm, or interval tests.
Detailed process dossiers appear in
Appendix~\ref{app:discovery-provenance}.
\begin{figure}[H]
\centering
\resizebox{0.98\textwidth}{!}{%
\begin{tikzpicture}[
  font=\small,
  node distance=4mm,
  stage/.style={
    draw=black!65,
    fill=white,
    rounded corners=2.5pt,
    align=center,
    font=\footnotesize,
    inner xsep=2.5pt,
    inner ysep=4pt,
    minimum height=13mm,
    text width=22mm
  },
  brief/.style={
    stage,
    fill=black!3
  },
  exploratory/.style={
    stage,
    dashed,
    draw=blue!65!black
  },
  certified/.style={
    stage,
    draw=green!45!black,
    line width=0.7pt
  },
  phase band/.style={
    rounded corners=4pt,
    inner xsep=1.5mm,
    inner ysep=2.5mm
  },
  phase title/.style={
    font=\scriptsize,
    align=center
  },
  route/.style={
    draw=black!30,
    fill=black!2,
    rounded corners=2.5pt,
    align=left,
    font=\scriptsize,
    inner sep=3.5pt,
    text width=56mm
  },
  feedback label/.style={
    fill=white,
    rounded corners=2pt,
    align=center,
    font=\scriptsize\itshape,
    inner xsep=3pt,
    inner ysep=1.5pt,
    text width=72mm
  },
  arr/.style={
    -{Stealth[length=2mm]},
    semithick,
    draw=black!70
  },
  feedback/.style={
    -{Stealth[length=2mm]},
    semithick,
    draw=red!55!black
  }
]

\node[brief] (brief)
  {Mathematical brief\\identity-slack};

\node[exploratory, right=of brief] (explore)
  {AI exploration\\reduction\\screening};

\node[exploratory, right=of explore] (candidate)
  {Numerical or\\symbolic candidate};

\node[certified, right=of candidate] (reconstruct)
  {Exact rational\\reconstruction};

\node[certified, right=of reconstruct] (gates)
  {Claim promotion\\exact gates};

\node[certified, right=of gates] (theorem)
  {Theorem-level\\claim};

\node[phase title]
  (explore-title)
  at ($(explore.north)!0.5!(candidate.north)+(0,7.5mm)$)
  {\textbf{Exploration}\\[-1pt]
   \textit{candidate evidence only}};

\node[phase title]
  (certify-title)
  at ($(reconstruct.north)!0.5!(theorem.north)+(0,7.5mm)$)
  {\textbf{Certification}\\[-1pt]
   \textit{exact verification + human audit}};

\begin{scope}[on background layer]
  \node[
    phase band,
    fill=blue!5,
    draw=blue!20,
    fit=(explore)(candidate)(explore-title)
  ] {};

  \node[
    phase band,
    fill=green!5,
    draw=green!20!black,
    fit=(reconstruct)(gates)(theorem)(certify-title)
  ] {};
\end{scope}

\draw[arr] (brief) -- (explore);
\draw[arr] (explore) -- (candidate);
\draw[arr] (candidate) -- (reconstruct);
\draw[arr] (reconstruct) -- (gates);
\draw[arr] (gates) -- (theorem);

\draw[feedback]
  (gates.south)
  .. controls ([yshift=-13mm]gates.south west)
           and ([yshift=-13mm]explore.south east) ..
  node[feedback label,midway,above=1.5mm]
    {Gate failure: revise the search object, candidate,\\
     or evidence requirement}
  (explore.south);

\coordinate (route-mid)
  at ($(candidate)!0.5!(reconstruct)$);

\node[route,anchor=north east]
  at ([xshift=-3mm,yshift=-25mm]route-mid.south)
  {\textbf{Codex route}\\[-1pt]
   Long-running project workspace;\\
   iterative human steering};

\node[route,anchor=north west]
  at ([xshift=3mm,yshift=-25mm]route-mid.south)
  {\textbf{Kimi route}\\[-1pt]
   Frozen blind workspace;\\
   no Codex candidate or certificate};

\end{tikzpicture}%
}
\caption{Candidate-generation and claim-promotion workflow shared by the two
AI-assisted research routes.
Models may propose representations, search objects, and candidates in the
exploration stage, but only conclusions that pass exact verification and a
human claim-scope audit enter the final mathematical record. The two routes
share the same mathematical question and evidence principles, but use
different harness configurations.}
\label{fig:research-loop}
\end{figure}

\paragraph{Search space and construction style.}
The two routes first differ markedly in their search objects.
The Codex route reduced the original ADMM iteration to a piecewise-affine
dynamical system and then shifted from searching Hessian data
\((Q_1,Q_2,\bar b)\) to searching resolvent parameters \((M,N)\) and a
realizable active-set itinerary.  Its core object is a switching system
formed by distinct projection branches, culminating in the itinerary
\[
  (00)^2(01)^{64}.
\]
Thus the route is closer to a construction driven by dynamical mechanism:
first identify a near-rotation/reset structure, then recover an optimization
problem that realizes it.

The Kimi route is closer to inverse design of an optimization instance.
It jointly searched over KKT-compatible quadratic-program data
\[
  (\widehat A,\widehat B,\widehat Q_x,\widehat Q_y,
    \widehat c_1,\widehat c_2)
\]
and a realizable projection itinerary \(\mathcal W_{23}\), and used a fixed
point of the period return map to construct an ADMM instance with a
prescribed dynamical property.

\paragraph{State representation and analysis object.}
The Codex route used the four-dimensional signed piecewise-affine state
\[
  s=(y,q),\qquad q=z+\lambda,
\]
whose main role is to expose active-set switching.  The period-\(66\)
itinerary can be interpreted as a comparatively long near-rotation phase,
followed by a short reset through a few \(00\) branches.

The Kimi route used the six-dimensional projector-aligned state
\[
  v=(y,t),\qquad t=z+\lambda,
\]
and analyzed the one-period return map
\[
  \Phi_{\rm per}(v)=M_{\rm per}v+c_{\rm per}
\]
directly.  This representation unifies existence and local stability of the
periodic orbit as questions about a return-map fixed point and contraction.

\paragraph{Certificate goals and dynamical conclusions.}
The primary goal of the Codex certificate is a strict existence proof for a
non-KKT periodic orbit.  Its evidence includes exact period closure, strict
branch admissibility, raw ADMM replay, and minimal-period verification.
The resulting period-\(66\) counterexample moreover retains the same branch
itinerary under small data perturbations.

The Kimi certificate further seeks local attraction of the periodic orbit.
In addition to period closure and strict projection conditions, it
constructs a rational Lyapunov matrix satisfying
\[
  P-M_{\rm per}^{\top}PM_{\rm per}\succ0
\]
and an invariant ellipsoid preserved by the return map.
Thus period-\(23\) nonconvergence is not the phenomenon of a single special
initialization, but occurs on an open set of reduced initializations.

\paragraph{Continuation in optimization theory.}
The Codex route not only produced a counterexample, but also carried the
active-set switching instability into multiplier-relaxation analysis:
after freezing the problem and initialization and varying only the
multiplier step \(\tau\), it further studied problem-dependent small-step
convergence, the impossibility of a class-uniform step, and a fixed-instance
local stability boundary.
The theoretical emphasis of the Kimi route is instead on return-map
contraction and basin stability under reduced-initialization perturbations.

Table~\ref{tab:workflow-comparison} summarizes the main differences between
the two routes in harness configuration, mathematical search objects, and
final certificates.

\begin{table}[H]
\centering
\caption{Comparison of the two realized AI-assisted research routes.}
\label{tab:workflow-comparison}
\small
\setlength{\tabcolsep}{4pt}
\renewcommand{\arraystretch}{1.14}
\begin{tabular}{
  >{\raggedright\arraybackslash}p{0.175\textwidth}
  >{\raggedright\arraybackslash}p{0.365\textwidth}
  >{\raggedright\arraybackslash}p{0.365\textwidth}}
\toprule
\textbf{Aspect}
&
\textbf{Codex / GPT-5.6 Sol route}
&
\textbf{Kimi Code / Kimi K3 route}
\\
\midrule

\textbf{Harness configuration}
&
Long-running project workspace; project-specific ADMM skill and experiment
toolchain; persistent research artifacts; iterative human steering
&
Frozen blind workspace; generic research skills with shell/Python tools;
isolated route state; few continue-search instructions
\\
\midrule

\textbf{Main search object}
&
Resolvent parameters \((M,N)\) and realizable active-set itinerary
\((00)^2(01)^{64}\)
&
KKT-compatible QP data and projection itinerary \(\mathcal W_{23}\)
\\
\midrule

\textbf{Reduced state}
&
\(s=(y,q)\in\R^4\), \(q=z+\lambda\);
direct \(m=2\) all-identity model
&
\(v=(y,t)\in\R^6\), \(t=z+\lambda\);
\(m=3\) model, equivalently convertible to all-identity form
\\
\midrule

\textbf{Construction style}
&
Mechanism-driven: state reduction, branch-spectrum analysis, and
near-rotation/reset itinerary design
&
Inverse design: jointly synthesize problem data, projection itinerary, and
return-map fixed point
\\
\midrule

\textbf{Main certificate}
&
Exact period closure, strict branch checks, raw ADMM replay, and
minimal-period verification
&
Exact period closure, strict projection checks, discrete Lyapunov
inequality, and invariant ellipsoid
\\
\midrule

\textbf{Dynamical conclusion}
&
Minimal period-\(66\) non-KKT orbit; exposes active-set switching
instability
&
Minimal period-\(23\) locally attracting non-KKT orbit; certified open
attracting basin
\\
\midrule

\textbf{Robustness type}
&
Structural persistence under problem-data perturbations
&
Basin stability under reduced-initialization perturbations
\\
\midrule

\textbf{Theorem continuation}
&
Multiplier relaxation, step-size sensitivity, and class-level step
conclusions
&
Return-map contraction and local attracting-basin certification
\\
\midrule

\textbf{Recorded process use}
&
\(\approx 40\)\,h wall-clock;
\(\approx 1.704\times10^9\) recorded tokens
&
\(\approx 9\)\,h wall-clock;
\(\approx 6.327\times10^7\) recorded tokens
\\
\bottomrule
\end{tabular}

\vspace{4pt}
\parbox{0.95\textwidth}{\footnotesize\emph{Scope note.}
The times and token counts in the table record only the process use of two
realized runs.
Because the routes differ in base model, workspace, tools, accumulated
context, token accounting, and human steering, these numbers do not
constitute a controlled model-efficiency comparison.
Full prompts, workspace manifests, intervention records, and verification
commands appear in Appendix~\ref{app:discovery-provenance}.}
\end{table}

The two routes illustrate complementary modes of AI-assisted optimization
research.
The Codex route emphasizes discovery of a hidden state-space structure and
an algorithmic instability mechanism;
the Kimi route emphasizes inverse construction of an optimization instance
with prescribed dynamical behavior, together with certification of its local
stability.
These differences are consistent with the respective harness configurations
and historical paths, suggesting that persistent workspaces, prior
representations, project skills, evidence gates, and human decisions may
jointly shape the mathematical space a model actually explores.

Nevertheless, the present records cannot uniquely attribute the divergence
to the base model, workspace memory, project skills, or human steering
alone.
Identifying their separate effects would require controlled model--harness
ablations under matched tools, budgets, and prompts.
Accordingly, the comparison in this paper should be understood as one
between two realized \emph{model--harness routes}, not as a ranking of
base-model capabilities.

\section{Discussion}
\label{sec:conclusion}

\subsection{What the Results Show}

An identity last coefficient matrix does not by itself restore
convergence of unmodified direct three-block ADMM.  The
period-\(66\) example rules out explanations based on nonconvexity,
nonunique block solves, projection ties, or floating-point drift, and
its itinerary persists under small data perturbations.  The
period-\(23\) example adds a certified open attracting basin and,
by nonsingularity of \(\widehat A\) and \(\widehat B\), an equivalent
all-identity \([I_3,I_3,I_3]\) instance
(Corollary~\ref{cor:k3-identity-block-equivalence}).
Multiplier relaxation further shows that a problem-dependent small
step can restore convergence, but no positive relative step is
class-uniform.

\subsection{AI for Optimization: From Problem Solving to Theory Discovery}

Broadly, AI for optimization concerns not only using AI to solve a given
optimization model, but also using AI in the formation of optimization
models, algorithms, and theory.  Relative to the object of study, this role
can be organized into three interrelated levels.

At the \emph{problem-solving level}, AI may assist formulation and
reformulation, variable and constraint decomposition, algorithm and solver
selection, initialization, parameter configuration, and runtime diagnosis or
intervention.  The usual goal is to improve efficiency, robustness, or
automation on a fixed problem.

At the \emph{algorithm-design level}, AI may further participate in the design
of update rules, relaxation parameters, penalty terms, correction steps, and
adaptive strategies.  The task is no longer only to execute an existing
algorithm, but to diagnose failure modes and propose analyzable
modifications suggested by the discovered structure.  The multiplier-
relaxation results in this paper illustrate this level: after the
period-\(66\) counterexample had been certified, the research continued from
``why the algorithm fails'' to ``which part of the update restores
stability'', and then distinguished problem-dependent convergent steps,
fixed-instance local stability boundaries, and the impossibility of
class-uniform guarantees.

At the \emph{theory-discovery level}, AI may participate in mathematical
representation discovery, structured instance design, counterexample and
extreme-instance construction, theorem conjecture, proof-route search, and
exact certificate generation.  The present work primarily studies this
theory-facing role.  The two routes show that AI can reshape the search space
of optimization research in different ways: 
the Codex route reduced the
original ADMM iteration to a low-dimensional signed piecewise-affine system
and turned counterexample search into the design of resolvent parameters and
reachable active-set itineraries; the Kimi route jointly searched over
KKT-compatible problem data and projection itineraries, turning periodic
counterexample construction into inverse instance design with prescribed
return-map properties.

The AI-for-optimization capabilities illustrated here can therefore be
summarized in the following stages.

\paragraph{Representation discovery.}
The raw iteration of a complicated optimization algorithm often contains
redundant variables, implicit projections, and multilayered dependencies.
AI can search for more analysis-friendly state variables, coordinate changes,
operator splittings, or piecewise structures, thereby transforming the
original problem into a finite-dimensional linear, affine, or operator
dynamical system.  A change of representation affects not only proof
difficulty, but also which mathematical objects can subsequently be searched
and certified.

\paragraph{Structured construction and inverse design.}
AI can convert open questions such as ``does a counterexample exist?'' into
construction problems with explicit parameterizations, branch conditions, and
algebraic constraints.  The goal may be to design an optimization instance on
which an algorithm exhibits a prescribed behavior---for example a periodic
orbit, boundary stability, slow convergence, active-set switching, or
parameter sensitivity.  The same capability can also generate stress-test
instances and robustness benchmarks, not only theoretical counterexamples.

\paragraph{Failure diagnosis and algorithmic intervention.}
When an algorithm exhibits nonconvergence, oscillation, or numerical
instability, AI can help separate sources such as objective geometry,
constraint degeneracy, block ordering, active-set switching, or parameter
choice.  The resulting dynamical structure can then guide algorithmic
modifications, including relaxation, damping, proximal regularization,
correction steps, and adaptive parameter rules.  Counterexample construction
is therefore not only a negative result, but also an input to algorithm
design.

\paragraph{Proof search and exact certification.}
Numerical experiments and floating-point computation may discover candidates,
but they cannot independently carry theorem-level evidence.  AI can organize
symbolic derivation, branch enumeration, rational reconstruction, interval
arithmetic, Lyapunov search, Schur stability tests, and Sturm root counting
into a reproducible certification pipeline.  In this paper, agents nominated
candidate representations, derivations, and certificate designs; period
closure, strict branch conditions, raw ADMM replay, and the corresponding
stability predicates were supported by independently executable exact checks.
Analytic conclusions remain established by the mathematical arguments in the
main text and appendices.

\paragraph{Theorem continuation.}
The role of AI can extend beyond ``finding an answer'' to a single question.
After a candidate or theorem has been certified, representations, code,
failure records, and proof artifacts in a persistent workspace can be reused
to pose neighboring questions---changing a parameter, relaxing an assumption,
seeking a boundary case, or establishing an impossibility result.  The
progression in this paper from the period-\(66\) counterexample to local
multiplier-step stability, convergence from a specified initialization,
problem-dependent global guarantees, and the impossibility of class-uniform
guarantees is an instance of such theorem continuation.

The overall process can be summarized as
\begin{equation*}
\begin{aligned}
  &\text{problem formulation}
  \longrightarrow
  \text{representation discovery}
  \longrightarrow
  \text{candidate or algorithm design}\\
  &\longrightarrow
  \text{analysis}
  \longrightarrow
  \text{exact certification}
  \longrightarrow
  \text{theorem or algorithm continuation}
  \longrightarrow
  \text{knowledge reuse}.
\end{aligned}
\end{equation*}
The final reuse step may include distilling effective state reductions, proof
templates, certificate structures, and experimental workflows into research
skills that can be invoked on other optimization problems.

\paragraph{The role of the research harness.}
These capabilities are not produced by the base model alone; they depend on
the research environment in which the model operates.  A versioned workspace
provides long-term research memory; tools and skills delimit executable
operations; persistent artifacts allow earlier results to be reused; exact
verifiers prescribe the evidentiary standard by which candidates enter the
mathematical record; and human checkpoints control problem choice, claim
scope, literature boundaries, and final acceptance.  Thus a research harness
does more than store research logs: it also shapes the space of
representations, candidates, and proofs that a model can explore.

This paper presents one case study of that research pattern on the
identity-slack ADMM question.  It does not prove that the same capabilities
already transfer reliably to other optimization domains, nor can it separate
the causal contributions of the base model, workspace memory, project skills,
and human steering from the difference between the two routes.  Future work
in AI for optimization may further explore Lyapunov-function construction,
error-bound discovery, splitting design, adaptive update rules,
extreme-instance generation, and reusable proof patterns, and may evaluate
transferability through frozen workspaces, matched tool budgets, and
controlled ablation experiments.

\subsection{Limits and Outlook}

Several mathematical questions remain open.  First, it is still unclear
whether the two-dimensional pure-quadratic subclass admits a shorter
counterexample whose projection branches are all strictly admissible.
Second, quantitative conditions excluding active-set switching instability
are still needed, as is a sharper characterization of the exact global
multiplier-step range for a fixed problem.  Finite-precision computation,
inexact block solves, alternative block update orders, nonorthant cone
constraints, and more general nonsmooth models also require further analysis.

The present certificates rely mainly on exact rational arithmetic, finite
symbolic derivation, and independently executable algebraic and spectral
checks; they have not yet been fully translated into formal proofs inside a
proof assistant.  A natural next step is to encode the critical ingredients---
period closure, branch admissibility, Lyapunov inequalities, and
Schur--Sturm tests---in Lean, Isabelle, or another formal verification
system, thereby narrowing the gap between computer-assisted certificates and
fully formalized proofs.

On the AI-evaluation side, this paper compares two realized
model--harness routes rather than base-model capabilities under controlled
conditions.  Future studies should use frozen workspaces, preregistered
success criteria, matched tool and compute budgets, and controlled human or
model baselines.  Systematic ablations of the base model, workspace memory,
project-specific skills, persistent research artifacts, and human steering
are also needed to identify each component's contribution to representation
discovery, candidate construction, certificate design, and theorem
continuation.  Another important question is whether the state-reduction,
structured-search, and exact-certification patterns developed here transfer
to Douglas--Rachford splitting, primal--dual splitting, and other operator
splitting methods, rather than merely rediscovering or reproducing the ADMM
examples of this paper.

\newcommand{\bysame}{Tianyi Lin, Shiqian Ma, and Shuzhong Zhang}
{\small
\bibliographystyle{amsplain}
\bibliography{references}
}

\appendix

\section{Multiplier-Relaxation Proofs}
\label{app:certificates}

\subsection{Proof of Proposition~\ref{prop:general-small-dual-step}}
\label{app:general-small-dual-step-proof}
{\small
\begin{proof}
Write \(u=(x,y,z)\) and \(E=[A,B,I_m]\).  Decompose the augmented
Lagrangian as
\[
  \mathcal L_\beta(u;\lambda)=s_\lambda(u)+h(u),
\]
where
\[
  s_\lambda(u)
  :=F(x)+G(y)-\lambda^\top(Eu-b)
    +\frac{\beta}{2}\|Eu-b\|^2,
  \qquad
  h(u):=\iota_{\R_+^m}(z).
\]
Set
\[
  d_\beta(\lambda)=\min_u\mathcal L_\beta(u;\lambda).
\]
We first verify uniqueness of the KKT point.  Full row rank of
\([A\ B]\) implies strict feasibility: for any \(z>0\), the equation
\([A\ B](x,y)=b-z\) is solvable.  Strong convexity gives a unique primal
solution.  In the present multiplier convention, the unaugmented negative
dual objective is
\[
  \phi_0(\lambda)
  =F^*(A^\top\lambda)+G^*(B^\top\lambda)
   -b^\top\lambda+\iota_{\R_-^m}(\lambda).
\]
Standard conjugacy results imply that an \(L\)-smooth convex function has a
\(1/L\)-strongly convex conjugate \cite{BC2017}.  Hence \(F^*\) and
\(G^*\) are \(1/L_F\)- and \(1/L_G\)-strongly convex, respectively.
Consequently,
\[
  m_0=
  \lambda_{\min}\!\left(
    \frac{AA^\top}{L_F}+\frac{BB^\top}{L_G}
  \right)>0,
\]
where strict positivity follows from the full-row-rank assumption.
The multiplier is therefore unique.

We next establish the global primal error bound at fixed multiplier.  Set
\[
  K_\mu=\diag(\mu_FI,\mu_GI,0)+\beta E^\top E,\qquad
  \mu_p=\lambda_{\min}(K_\mu)>0,
\]
\[
  L_p=\max\{L_F,L_G\}+\beta\|E\|^2.
\]
For all \(u,v\),
\[
\begin{aligned}
  \langle\nabla s_\lambda(u)-\nabla s_\lambda(v),u-v\rangle
  &\geq (u-v)^\top K_\mu(u-v)
   \geq\mu_p\|u-v\|^2,\\
  \|\nabla s_\lambda(u)-\nabla s_\lambda(v)\|
  &\leq L_p\|u-v\|.
\end{aligned}
\]
Let \(\bar u=\bar u(\lambda)\) denote the unique minimizer of
\(\mathcal L_\beta(\cdot;\lambda)\), and set
\[
  p=\operatorname{prox}_h\bigl(u-\nabla s_\lambda(u)\bigr),
  \qquad
  r=u-p=\operatorname{PG}_\lambda(u).
\]
Proximal optimality gives
\[
  r-\nabla s_\lambda(u)\in\partial h(p),
  \qquad
  -\nabla s_\lambda(\bar u)\in\partial h(\bar u).
\]
By monotonicity of \(\partial h\),
\[
  \left\langle
    \nabla s_\lambda(u)-\nabla s_\lambda(\bar u),
    p-\bar u
  \right\rangle
  \leq
  \langle r,p-\bar u\rangle.
\]
Using the \(\mu_p\)-strong monotonicity and \(L_p\)-Lipschitz continuity of
\(\nabla s_\lambda\), we obtain
\[
\begin{aligned}
  \mu_p\|u-\bar u\|^2
  &\leq
  \left\langle
    \nabla s_\lambda(u)-\nabla s_\lambda(\bar u),
    u-\bar u
  \right\rangle\\
  &=
  \left\langle
    \nabla s_\lambda(u)-\nabla s_\lambda(\bar u),
    p-\bar u
  \right\rangle
  +
  \left\langle
    \nabla s_\lambda(u)-\nabla s_\lambda(\bar u),
    r
  \right\rangle\\
  &\leq
  \langle r,p-\bar u\rangle
  +L_p\|u-\bar u\|\,\|r\|\\
  &\leq
  (1+L_p)\|u-\bar u\|\,\|r\|.
\end{aligned}
\]
Therefore
\[
  \|u-\bar u(\lambda)\|
  \leq
  \kappa_p\|\operatorname{PG}_\lambda(u)\|,
  \qquad
  \kappa_p=\frac{1+L_p}{\mu_p},
\]
globally, with a constant independent of \(\lambda\).

Exact \(x\to y\to z\) block minimization gives
\[
  \gamma=\frac12\min\!\left\{
  \lambda_{\min}(\mu_FI+\beta A^\top A),
  \lambda_{\min}(\mu_GI+\beta B^\top B),\beta
  \right\}>0
\]
and
\[
  \mathcal L_\beta(u^k;\lambda^k)
  -\mathcal L_\beta(u^{k+1};\lambda^k)
  \geq\gamma\|u^{k+1}-u^k\|^2.
\]
Let
\[
  a_x=L_F+\beta\|A^\top A\|,\qquad
  a_y=L_G+\beta\|B^\top B\|,
\]
\[
  \sigma^2=
  a_x^2+\beta^2\|B^\top A\|^2+a_y^2+
  \beta^2\|A\|^2+\beta^2\|B\|^2+(\beta+2)^2.
\]
For
\(\Delta x=x^k-x^{k+1}\), \(\Delta y=y^k-y^{k+1}\), and
\(\Delta z=z^k-z^{k+1}\), write the three blocks of
\(\operatorname{PG}_{\lambda^k}(u^k)\) as
\((\operatorname{PG}_{x,k},\operatorname{PG}_{y,k},
\operatorname{PG}_{z,k})\).  The block optimality conditions and
nonexpansiveness of the orthant projection then give
\[
\begin{aligned}
  \|\operatorname{PG}_{x,k}\|&\leq a_x\|\Delta x\|,\\
  \|\operatorname{PG}_{y,k}\|
    &\leq\beta\|B^\top A\|\|\Delta x\|+a_y\|\Delta y\|,\\
  \|\operatorname{PG}_{z,k}\|&\leq\beta\|A\|\|\Delta x\|
    +\beta\|B\|\|\Delta y\|+(\beta+2)\|\Delta z\|.
\end{aligned}
\]
Consequently,
\[
  \|\operatorname{PG}_{\lambda^k}(u^k)\|
  \leq\sigma\|u^{k+1}-u^k\|.
\]
For the \(x,y\) blocks this uses only global Lipschitz continuity of
\(\nabla F,\nabla G\); for \(z\), it compares the projection fixed-point
identities at the current point and the block minimizer.

Completing the square in the quadratic penalty and applying Fenchel duality
gives the explicit Moreau-envelope representation
\begin{equation}
\label{eq:augmented-dual-moreau}
  -d_\beta(\lambda)
  =
  \min_{\eta\in\R^m}
  \left\{
    \phi_0(\eta)
    +\frac{1}{2\beta}\|\eta-\lambda\|^2
  \right\}.
\end{equation}
Since \(\phi_0\) is \(m_0\)-strongly convex, its Moreau envelope in
\eqref{eq:augmented-dual-moreau} is strongly convex with modulus
\[
  m_d=\frac{m_0}{1+\beta m_0}>0
\]
and has \(1/\beta\)-Lipschitz gradient \cite{BC2017}.
It follows that
\[
  \|\lambda-\lambda^\star\|
  \leq m_d^{-1}\|\nabla d_\beta(\lambda)\|,
  \qquad
  \nabla d_\beta(\lambda)=b-E\bar u(\lambda).
\]

Define
\[
\begin{aligned}
  \Delta_p^k
  &=\mathcal L_\beta(u^{k+1};\lambda^k)-d_\beta(\lambda^k),\\
  \Delta_d^k
  &=d_\beta(\lambda^\star)-d_\beta(\lambda^k),\\
  V^k&=\Delta_p^k+\Delta_d^k .
\end{aligned}
\]
The gap calculation of Hong and Luo
\cite[Theorem~3.1, Lemma~3.1, and (3.16)--(3.17)]{HL2017},
in the present sign convention, yields
\[
  V^k-V^{k-1}
  \leq
  -a\|u^{k+1}-u^k\|^2
  -\tau\|\nabla d_\beta(\lambda^k)\|^2,
  \qquad
  a=\gamma-\tau\|E\|^2\kappa_p^2\sigma^2 .
\]
Thus \(a>0\) whenever
\[
  0<\tau<
  \frac{\gamma}{\|E\|^2\kappa_p^2\sigma^2}.
\]
The cost-to-go estimate on the noncompact orthant can be verified directly.
Set
\(\Delta x=x^k-x^{k+1}\),
\(\Delta y=y^k-y^{k+1}\), and
\(\Delta z=z^k-z^{k+1}\), and define
\[
  H_{\rm GS}
  =\beta
  \begin{pmatrix}
    0&A^\top B&A^\top\\
    0&0&B^\top\\
    0&0&0
  \end{pmatrix},
  \qquad C_p=\frac{\|H_{\rm GS}\|^2}{2\mu_p}.
\]
The three exact block optimality conditions, including the normal-cone
condition for \(z\), give
\[
  w^{k+1}=
  \begin{pmatrix}
    -\beta A^\top(B\Delta y+\Delta z)\\
    -\beta B^\top\Delta z\\
    0
  \end{pmatrix}
  \in\partial_u\mathcal L_\beta(u^{k+1};\lambda^k).
\]
Uniform \(\mu_p\)-strong convexity at fixed multiplier therefore yields
\[
  \Delta_p^k
  \leq\frac{\|w^{k+1}\|^2}{2\mu_p}
  \leq C_p\|u^{k+1}-u^k\|^2.
\]
Moreover, \(q_\beta=-d_\beta\) is \(m_d\)-strongly convex and
\(1/\beta\)-smooth, so one may take
\[
  C_d=\frac{1}{2\beta m_d^2},\qquad
  \Delta_d^k
  \leq C_d\|\nabla d_\beta(\lambda^k)\|^2.
\]
These constants are independent of \(k,\lambda^k\), and the dual step.
Consequently,
\[
  \eta=\min\{a/C_p,\tau/C_d\}>0,\qquad
  \rho=(1+\eta)^{-1}\in(0,1)
\]
imply
\[
  V^k-V^{k-1}\leq-\eta V^k,\qquad
  V^k\leq\rho V^{k-1}.
\]

Finally, strong convexity and Lipschitz continuity of the fixed-multiplier
minimizer give
\[
\begin{aligned}
  \|u^{k+1}-\bar u(\lambda^k)\|^2
  &\leq 2\Delta_p^k/\mu_p,\\
  \|\lambda^k-\lambda^\star\|^2
  &\leq 2\Delta_d^k/m_d,\\
  \|\bar u(\lambda^k)-u^\star\|
  &\leq(\|E\|/\mu_p)\|\lambda^k-\lambda^\star\|.
\end{aligned}
\]
Hence there is a constant \(C>0\) for which
\[
  \|(u^{k+1},\lambda^k)-(u^\star,\lambda^\star)\|
  \leq C\rho^{k/2}.
\]
The assumption \(z^0\in\R_+^m\) in
Proposition~\ref{prop:general-small-dual-step} ensures that the initial
augmented-Lagrangian value is finite.  The \(z\)-update then preserves
\(z^k\in\R_+^m\) for every \(k\geq0\).
The compact-polyhedral assumption in the original Hong--Luo theorem is not
invoked directly here: the global smooth strongly convex structure above
replaces the error bounds for which that assumption is used.
Accordingly, the inherited component is the gap-contraction architecture; the
model-specific component proved here is the verification of the required
global bounds and explicit constants on the noncompact slack-last class.
\end{proof}
}

\subsection{Backtracking Rule}
\label{app:adaptive-dual-step-proof}
{\small
This appendix proves Corollary~\ref{thm:adaptive-dual-step} and the quadratic
refinement stated in Section~\ref{sec:relaxation}.

\paragraph{Proof of Corollary~\ref{thm:adaptive-dual-step}.}
Fix an arbitrary candidate \(\tau\), and abbreviate its trial values by
\[
  \lambda=\widehat\lambda,\qquad
  u=u^k,\qquad u^+=\widehat u,\qquad
  D=\mathcal L_\beta(u;\lambda)-\mathcal L_\beta(u^+;\lambda).
\]
Let
\[
\begin{aligned}
  e&=u-\bar u(\lambda),\\
  v&=\operatorname{PG}_\lambda(u).
\end{aligned}
\]
The global primal error bound in
Proposition~\ref{prop:general-small-dual-step} directly gives
\begin{equation}
\label{eq:adaptive-majorant}
  \|Ee\|^2
  \leq\|E\|^2\|e\|^2
  \leq\|E\|^2\kappa_p^2\|v\|^2
  =B_E(v).
\end{equation}
The trial decrease can be evaluated from the block optimality conditions,
without subtracting two nearby augmented-Lagrangian values.  After the
initial primal sweep, \(z,z^+\in\R_+^m\).  If
\(\Delta x=x-x^+\), \(\Delta y=y-y^+\), and
\[
  q=b-Ax^+-By^++\lambda/\beta,
  \qquad z^+=[q]_+,
\]
and if
\[
  D_F(a,c)=F(a)-F(c)-\langle\nabla F(c),a-c\rangle
\]
with \(D_G\) defined analogously, cancellation of the first-order terms in
the block optimality conditions gives
\begin{equation}
\label{eq:adaptive-block-decrease}
\begin{aligned}
  D={}&
  D_F(x,x^+)+\tfrac\beta2\|A\Delta x\|^2\\
  &+D_G(y,y^+)+\tfrac\beta2\|B\Delta y\|^2\\
  &+\tfrac\beta2
  \bigl(\|z-q\|^2-\|z^+-q\|^2\bigr).
\end{aligned}
\end{equation}

Exact block minimization gives uniform \(\gamma,\sigma>0\) such that
\[
  D\geq\gamma\|u^+-u\|^2,\qquad
  \|v\|\leq\sigma\|u^+-u\|.
\]
Consequently every
\[
  0<\tau\leq
  \tau_{\mathrm{safe},E}
  :=\frac{(1-\chi)\gamma}
  {\|E\|^2\kappa_p^2\sigma^2}
\]
satisfies \eqref{eq:adaptive-gate}.  A geometric search starting from
\(\tau_{\max}\) therefore terminates and accepts
\[
  \tau_k\geq
  \underline\tau_E
  :=\min\{\tau_{\max},\delta\tau_{\mathrm{safe},E}\}>0.
\]

For the accepted sequence, set
\(D_k=\mathcal L_\beta(u^k;\lambda^k)
-\mathcal L_\beta(u^{k+1};\lambda^k)\).  With
\(\bar u^k=\argmin_u\mathcal L_\beta(u;\lambda^k)\),
\(r^k=Eu^k-b\),
\(\bar r^k=E\bar u^k-b\), and
\(d_k=d_\beta(\lambda^k)\), direct expansion gives
\[
  V^k-V^{k-1}
  =-D_k+\tau_k\|r^k\|^2-2(d_k-d_{k-1}).
\]
Concavity of \(d_\beta\) and
\(\lambda^{k-1}-\lambda^k=\tau_kr^k\) imply
\[
  d_k-d_{k-1}
  \geq\tau_k\langle\bar r^k,r^k\rangle.
\]
Completing the square yields
\begin{equation}
\label{eq:adaptive-descent}
  V^k-V^{k-1}
  \leq
  \tau_k\|E(u^k-\bar u^k)\|^2-D_k
  -\tau_k\|\nabla d_\beta(\lambda^k)\|^2.
\end{equation}
Equations \eqref{eq:adaptive-gate} and
\eqref{eq:adaptive-majorant} imply
\[
  V^k-V^{k-1}
  \leq-\chi D_k-\tau_k\|\nabla d_\beta(\lambda^k)\|^2.
\]
The proof of Proposition~\ref{prop:general-small-dual-step} explicitly
constructs step-sequence-independent constants \(C_p,C_d>0\) such that
\[
  \Delta_p^k\leq(C_p/\gamma)D_k,\qquad
  \Delta_d^k\leq C_d\|\nabla d_\beta(\lambda^k)\|^2.
\]
Using \(\tau_k\geq\underline\tau_E\) gives
\begin{equation}
\label{eq:adaptive-contraction}
  V^k\leq(1+\eta)^{-1}V^{k-1},\qquad
  \eta=\min\left\{\frac{\chi\gamma}{C_p},
                  \frac{\underline\tau_E}{C_d}\right\}>0.
\end{equation}
This closes the proof for the general smooth strongly convex class.

\paragraph{Quadratic weighted refinement.}
If \(F,G\) are the strongly convex quadratics stated in the main text, set
\[
  K=\diag(Q_1,Q_2,0)+\beta E^\top E,\qquad
  p=\operatorname{prox}_h(u-\nabla s_\lambda(u))=u-v.
\]
Proximal optimality and monotonicity of \(\partial h\) give
\[
  e^\top Ke+\|v\|^2\leq e^\top(I+K)v.
\]
Cauchy--Schwarz in the \(K\)-metric, together with
\(K\succeq\beta E^\top E\), yields
\begin{equation}
\label{eq:adaptive-k-majorant}
\begin{aligned}
  e^\top(I+K)v
  &\leq
  \sqrt{e^\top Ke}\,
  \sqrt{v^\top(K^{-1}+2I+K)v},\\
  \|Ee\|^2
  &\leq\frac1\beta e^\top Ke
  \leq\frac1\beta v^\top(K^{-1}+2I+K)v
  =B_K(v).
\end{aligned}
\end{equation}
This step uses the quadratic identity
\(\nabla s_\lambda(u)-\nabla s_\lambda(\bar u)=Ke\).

The majorant can be evaluated without forming \(K^{-1}\).  For
\(v=(v_x,v_y,v_z)\),
\begin{align}
  v^\top K^{-1}v
  ={}&(v_x-A^\top v_z)^\top Q_1^{-1}(v_x-A^\top v_z)
  \notag\\
  &+(v_y-B^\top v_z)^\top Q_2^{-1}(v_y-B^\top v_z)
  +\beta^{-1}\|v_z\|^2,                                      \label{eq:adaptive-inverse}\\
  v^\top Kv
  ={}&v_x^\top Q_1v_x+v_y^\top Q_2v_y
  +\beta\|Av_x+Bv_y+v_z\|^2.                                 \label{eq:adaptive-hessian}
\end{align}
Thus two pre-factorized solves with \(Q_1\) and \(Q_2\) suffice.  The two
Bregman terms in \eqref{eq:adaptive-block-decrease} reduce to
\(\tfrac12\|\Delta x\|_{Q_1}^2\) and
\(\tfrac12\|\Delta y\|_{Q_2}^2\).

Set
\[
  c_K=\frac1\beta
  \max_{\xi\in\sigma(K)}(\xi+\xi^{-1}+2).
\]
Since \(B_K(v)\leq c_K\|v\|^2\), every
\[
  0<\tau\leq
  \tau_{\mathrm{safe},K}
  :=\frac{(1-\chi)\gamma}{c_K\sigma^2}
\]
satisfies \eqref{eq:adaptive-k-gate}, and geometric backtracking accepts
\[
  \tau_k\geq
  \underline\tau_K
  :=\min\{\tau_{\max},\delta\tau_{\mathrm{safe},K}\}>0.
\]
Replacing \eqref{eq:adaptive-gate}--\eqref{eq:adaptive-majorant} by
\eqref{eq:adaptive-k-gate}--\eqref{eq:adaptive-k-majorant} in
\eqref{eq:adaptive-descent} gives the same contraction, with
\(\underline\tau_K\) in place of \(\underline\tau_E\).

The trial-point evaluation order is essential: the trial residual
\(\widehat v\) and decrease \(\widehat D\) must use the same trial dual
iterate.  If a candidate step is
rejected, both trial iterates must be discarded, and the next candidate must
be constructed from the same \((u^k,\lambda^{k-1})\).  The theorem assumes
exact solves of the block subproblems and exact evaluation of the acceptance
test; the strict margin needed for a finite-precision acceptance decision
requires separate analysis.
}

\subsection{Proof of Theorem~\ref{thm:no-universal-dual-step}}
\label{app:no-universal-dual-step-proof}

\begin{proof}
It suffices to take \(\beta=1\).  For a prescribed
\(0<\vartheta\leq80/119\), set
\[
  r=\sqrt{\frac{7\vartheta}{80-7\vartheta}}\in(0,1/4],
  \qquad
  u=(1,r,0)^\top,\qquad
  v=(1,3r/5,4r/5)^\top,
\]
and define
\[
  P_A=\frac{uu^\top}{1+r^2},\qquad
  P_B=\frac{vv^\top}{1+r^2}.
\]
For \(\alpha\in[0,\sqrt2-1]\), let
\[
\begin{aligned}
  s&=\frac{2\alpha}{1+\alpha^2},\qquad
  t=\frac{1-\alpha^2}{1+\alpha^2},\\
  A_\alpha&=tP_A+s(I-P_A),&
  Q_{1,\alpha}&=s^2P_A+t^2(I-P_A),\\
  B_\alpha&=tP_B+s(I-P_B),&
  Q_{2,\alpha}&=s^2P_B+t^2(I-P_B).
\end{aligned}
\]
Then
\(Q_{1,\alpha}+A_\alpha^\top A_\alpha
 =Q_{2,\alpha}+B_\alpha^\top B_\alpha=I\).
For \(\alpha>0\), both Hessians are positive definite and
\(A_\alpha,B_\alpha\) are
nonsingular.  Take
\[
  x^\star=y^\star=0,\qquad
  z^\star=(1,0,0)^\top,\qquad
  \lambda^\star=(0,-1,-1)^\top,
\]
and set
\[
  b=z^\star,\qquad
  c_1=A_\alpha^\top\lambda^\star,\qquad
  c_2=B_\alpha^\top\lambda^\star.
\]
This is the unique strictly complementary KKT point.

Fix \(D=\diag(1,0,0)\).  The exact Schur recursion in
Appendix~\ref{app:universal-step-obstruction} proves
\[
  \rho(U_D(r,0))>1,
  \qquad
  \vartheta=\frac{80r^2}{7(1+r^2)}.
\]
At \(\alpha=\sqrt2-1\),
\[
  A_\alpha=B_\alpha=I/\sqrt2,\qquad
  Q_{1,\alpha}=Q_{2,\alpha}=I/2,
\]
and the coordinates decouple.  The scalar characteristic polynomials for
a positive-slack and a zero-slack coordinate are
\[
  \frac14(2\zeta-1)^2(\zeta+\vartheta-1),
  \qquad
  \frac{\zeta}{4}
  \bigl[4\zeta^2+(3\vartheta-5)\zeta+1-\vartheta\bigr].
\]
Write
\(p(\zeta)=4\zeta^2+(3\vartheta-5)\zeta+1-\vartheta\).
The Jury inequalities for this quadratic factor are
\[
  4>|1-\vartheta|,\qquad
  p(1)=2\vartheta>0,\qquad
  p(-1)=10-4\vartheta>0,
\]
so this endpoint is strictly Schur stable throughout the stated interval.

By continuity, some \(\alpha_c\in(0,\sqrt2-1)\) satisfies
\(\rho(U_D(r,\alpha_c))=1\).  The corresponding problem remains strongly convex
and full row rank.  The branch cannot have eigenvalue \(+1\): a sufficiently
small perturbation along its eigenvector would remain in the strict affine
branch and create a second ADMM fixed point, contradicting uniqueness of
the KKT point.

The unit-circle eigenvalue is therefore \(-1\) or a nonreal conjugate pair.
Choose a genuine eigenmode: a real eigenvector for \(-1\), or the real
invariant subspace spanned by the real and imaginary parts of a complex
eigenvector.  For a nonzero \(e\) in that mode,
\(\{U_D(r,\alpha_c)^ke\}\) alternates or rotates.  It is therefore bounded but
nonconvergent.  Scaling \(e\) below the strict KKT projection margin keeps
every iterate in the same branch, so the branch orbit is a bounded
nonconvergent raw ADMM orbit.
Because the unit-circle eigenvalue is nonzero, the two inactive components
of the \(z\)-part of \(e\) vanish; taking \(e\) still smaller preserves the
positive active component and gives \(z^0\in\R_+^3\).
Since \(A_{\alpha_c}\) is nonsingular, set
\[
  x^0=A_{\alpha_c}^{-1}(b-B_{\alpha_c}y^0-z^0).
\]
Then \(A_{\alpha_c}x^0+B_{\alpha_c}y^0+z^0=b\), so the initial state is fully
feasible.  The stored \(x^0\) does not enter the right-hand side of the next
\(x\)-subproblem and therefore does not alter the \((y,z,\lambda)\) orbit
above.  Each subsequent \(x\)-update is an affine function of that bounded
reduced state, so the full \((x,y,z,\lambda)\) orbit is bounded as well.
\end{proof}

\subsection{Uniform-Step Obstruction}
\label{app:universal-step-obstruction}

For \(e=(\delta y,\delta z,\delta\lambda)\),
\(R_A=A_\alpha A_\alpha^\top\), and
\(R_B=B_\alpha B_\alpha^\top\), define
\[
\begin{aligned}
  Y_y&=B_\alpha^\top R_AB_\alpha,&
  Y_z&=-B_\alpha^\top(I-R_A),&
  Y_\lambda&=B_\alpha^\top(I-R_A),\\
  C_y&=(I-R_B)R_AB_\alpha,&
  C_z&=R_A-R_BR_A+R_B,&
  C_\lambda&=I-R_A-R_B+R_BR_A.
\end{aligned}
\]
On a fixed projection mask \(D\), the full error matrix is
\[
U_D(r,\alpha)=
\begin{pmatrix}
Y_y&Y_z&Y_\lambda\\
DC_y&DC_z&DC_\lambda\\
\vartheta(I-D)C_y&\vartheta(I-D)C_z&
(1-\vartheta)I+\vartheta(I-D)C_\lambda
\end{pmatrix}.
\]
For completeness, we recall the real Schur recursion used below.
Let
\[
  p_j(\zeta)
  =
  \sum_{\ell=0}^{d_j}
  a_{j,\ell}\zeta^{d_j-\ell},
  \qquad
  p_j^\sharp(\zeta)
  :=
  \zeta^{d_j}p_j(\zeta^{-1}),
\]
where \(a_{j,0}\) and \(a_{j,d_j}\) are respectively the leading and
constant coefficients of \(p_j\).  Define
\[
  \Delta_j
  :=
  a_{j,0}^2-a_{j,d_j}^2
\]
and, whenever \(\Delta_j>0\),
\[
  p_{j+1}(\zeta)
  :=
  \frac{
    a_{j,0}p_j(\zeta)
    -a_{j,d_j}p_j^\sharp(\zeta)
  }{\zeta}.
\]
A necessary Schur--Cohn condition is that
\(\Delta_j>0\) at every recursion stage.  Consequently, once the preceding
recursions are well defined, a negative \(\Delta_j\) certifies that the
original polynomial has a zero outside the unit disk.
At \(\alpha=0\), \(D=\diag(1,0,0)\), and
\(\vartheta=80r^2/[7(1+r^2)]\), remove the four zero roots and the root
\(1-\vartheta\) from the characteristic polynomial.  The first three exact
Schur deltas of the remaining quartic satisfy
\[
  \Delta_0>0,\qquad \Delta_1>0,\qquad \Delta_2<0
  \qquad(0<r\leq1/4),
\]
where, after positive factors are removed, the signs of \(\Delta_1\) and
\(\Delta_2\) are determined by
\[
\begin{aligned}
  -h_1(r),\qquad
  h_1(r)&=1205r^8+1977r^6-4643r^4-7385r^2-2450,\\
  h_2(r),\qquad
  h_2(r)&=14530r^{10}+57137r^8-131931r^6\\
  &\hspace{4em}-449323r^4-329735r^2-82950.
\end{aligned}
\]
For \(0<r\le1\),
\[
  h_1(r)\le-1461r^4-7385r^2-2450<0,
\]
and
\[
  h_2(r)\le
  -60264r^6-449323r^4-329735r^2-82950<0.
\]
Thus the signs are strict and a root lies outside the unit disk.  The exact
polynomial, factorization, and endpoint Jury checks are regenerated by
\begin{center}
\small\ttfamily
python python/verify\_universal\_step\_obstruction.py\\
--check
\end{center}
from the GitHub certificate repository root.  The underlying exact script is
\begin{center}
\small\ttfamily
python/analyze\_identity\_slack\_universal\_step\_obstruction.py.
\end{center}

\subsection{Period-66 Verification}

From the GitHub certificate repository
\begin{center}
\url{https://github.com/ConanXu-math/identity-slack-admm-cycle-certificate}
\end{center}
run
\begin{center}
\texttt{python python/verify\_certificate\_pair.py}.
\end{center}
One-shot acceptance of both frozen certificates is
\texttt{python python/verify\_all.py}.
The command invokes separately implemented four- and six-dimensional rational
replays and verifies strong convexity, the unique KKT point, every ADMM
update, \(66\)-step closure, all \(132\) strict projection inequalities, and
minimality of the period.

\subsection{Perturbation Robustness}

\begin{corollary}[Persistence under data perturbations]
\label{cor:robustness}
Let \(\mathbb S_{++}^2\) denote the cone of real \(2\times2\) symmetric
positive-definite matrices.  With \(A=B=I_2\), \(\beta=1\), and the update
order \(x\to y\to z\to\lambda\) fixed, there is an open neighborhood of
\((Q_1,Q_2,\bar b)\) in
\(\mathbb S_{++}^2\times\mathbb S_{++}^2\times\R^2\) such that each resulting
problem has a non-KKT periodic ADMM sequence of minimal period \(66\), with
the same projection-sign sequence and every projection inequality strict.
\end{corollary}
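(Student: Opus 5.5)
The plan is an implicit-function / continuity argument on the exact certificate of Theorem~\ref{thm:main}. Write the data as \(\xi=(Q_1,Q_2,\bar b)\in\mathbb S_{++}^2\times\mathbb S_{++}^2\times\R^2\). For \(\xi\) in \(\mathbb S_{++}^2\times\mathbb S_{++}^2\times\R^2\), the resolvents \(M(\xi)=(Q_1+I_2)^{-1}\) and \(N(\xi)=(Q_2+I_2)^{-1}\) and the offset \(g=(I_2-M)\bar b\) depend real-analytically on \(\xi\). Hence so do the branch matrices \(T_{00},T_{01}\) and the vector \(h\) in \eqref{eq:branch-map}, together with the lifts \(L_{00},L_{01}\) in \eqref{eq:affine-lift}. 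The composed return map \(L_{01}^{64}L_{00}^2\) of \eqref{eq:period-map} therefore has blocks \(\mathcal P(\xi)\), \(a(\xi)\) that are polynomial in the entries of \(M,N,\bar b\), and so continuous in \(\xi\).

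First, since \(\det(I_4-\mathcal P(\xi_0))\neq0\) at the certified data \(\xi_0\), continuity of the determinant gives a neighborhood \(U_1\) of \(\xi_0\) on which \(I_4-\mathcal P(\xi)\) is invertible. There the periodic candidate \(s^0(\xi)=(I_4-\mathcal P(\xi))^{-1}a(\xi)\) is well defined and continuous. Each intermediate iterate \(s^k(\xi)\), obtained by applying the prescribed prefix of branch maps to \(s^0(\xi)\), is also continuous, so the projection inputs \(q^k_i(\xi)\) for \(0\le k<66\), \(i=1,2\) are \(132\) continuous functions of \(\xi\). By \eqref{eq:strict-margin} each has absolute value above \(10^{-3}\) with the sign dictated by \(\mathcal W\) at \(\xi_0\). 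Finitely many strict inequalities survive on a smaller open neighborhood \(U_2\subset U_1\), say with margin above \(\tfrac12\cdot10^{-3}\). On \(U_2\) every step is therefore branch-admissible, so the affine compositions coincide with the true piecewise-affine recurrence \eqref{eq:signed-recurrence}. Reconstructing \(x^0,z^0,\lambda^0\) via \eqref{eq:raw-reconstruction} exactly as in the proof of Theorem~\ref{thm:main} then gives an ADMM sequence closing after \(66\) steps with word \(\mathcal W\).

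The remaining properties follow as in Theorem~\ref{thm:main}. Minimality of the period is combinatorial: a proper subperiod would force the sign word to be a proper repetition, and the word is unchanged. For the non-KKT property, each perturbed problem is still strongly convex with \(A=B=I_2\), so it has a unique KKT point. That point is a fixed point of the single-valued iteration and so has period \(1\), whereas the orbit has minimal period \(66\); hence the orbit does not contain it. Finally, the neighborhood is open in the product space because it is cut out by finitely many strict continuous inequalities, and we also intersect with the open cone condition.

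The main obstacle is not analytic but bookkeeping. We must confirm that the reduction \eqref{eq:signed-recurrence} and its equivalence with ADMM used only \(A=B=I_2\), \(\beta=1\) and the invertibility of \(Q_i+I_2\), and not the specific rank-one structure of \(M,N\). We must also confirm that the margin used is uniform over all \(132\) inequalities, so that a single neighborhood works. Both checks reduce to rereading Section~\ref{sec:reduction} with generic \(M,N\).
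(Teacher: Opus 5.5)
Your proposal is correct and follows essentially the same continuity argument as the paper. The shared steps are: analytic dependence of \(\mathcal P\) and \(a\) on the data, persistence of the nonsingularity of \(I_4-\mathcal P\), continuity of the fixed point and its \(66\) reduced iterates, survival of the \(132\) strict sign inequalities under small perturbations, and the unchanged primitive word. You additionally spell out the full-state reconstruction, the non-KKT argument via the fixed point, and the observation that the reduction uses only generic \(M,N\); the paper's much terser proof leaves these points implicit.
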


\begin{proof}
For the fixed projection-sign sequence, \(\mathcal P\) and \(a\) in
\eqref{eq:period-map} depend analytically on the problem data.  The period
system remains nonsingular near the displayed instance, so
\((I_4-\mathcal P)^{-1}a\) and its \(66\) reduced iterates vary continuously.
The signs in \eqref{eq:strict-margin} have a positive common margin and remain
strict under sufficiently small perturbations.  The primitive word is
unchanged.
\end{proof}

\subsection{Relaxed Branch Map}
\label{app:period66-branch-map}

Set \(\beta=1\),
\[
  M=(Q_1+I_2)^{-1},\qquad N=(Q_2+I_2)^{-1},\qquad I=I_2,
\]
and use the six-dimensional essential state
\(w=(y,z,\lambda)\in\R^6\).  Eliminating \(x^+\) from the primal
updates gives
\[
  C_y=(I-N)M,\qquad
  C_z=N+(I-N)M,\qquad
  C_\lambda=(I-N)(I-M),\qquad
  d=C_\lambda\bar b,
\]
and
\[
  q^+=C_yy+C_zz+C_\lambda\lambda+d.
\]
For a strict projection mask
\[
  D=\diag\bigl(\mathbf 1_{\{q_1^+>0\}},
                \mathbf 1_{\{q_2^+>0\}}\bigr),
  \qquad z^+=Dq^+,
\]
the identity
\(
x^++y^++z^+-\bar b=\lambda-q^++z^+
\)
yields
\[
  \lambda^+=(1-\tau)\lambda+\tau(I-D)q^+.
\]
Thus the matrices in \eqref{eq:relaxed-affine-map} are
\[
\begin{aligned}
  T_D^{(0)}&=
  \begin{pmatrix}
    NM&-N(I-M)&N(I-M)\\
    DC_y&DC_z&DC_\lambda\\
    0&0&I
  \end{pmatrix},\\[0.4em]
  T_D^{(1)}&=
  \begin{pmatrix}
    0&0&0\\
    0&0&0\\
    (I-D)C_y&(I-D)C_z&(I-D)C_\lambda-I
  \end{pmatrix},
\end{aligned}
\]
and
\[
  a_D(\tau)=
  \begin{pmatrix}
    N(I-M)\bar b\\ Dd\\ 0
  \end{pmatrix}
  +\tau
  \begin{pmatrix}
    0\\ 0\\ (I-D)d
  \end{pmatrix}.
\]
This also shows directly that \(\tau\) affects only the multiplier block
row.

\subsection{Proof of Theorem~\ref{thm:relaxation}}
\label{app:period66-dual-step-proof}

\begin{proof}
Set \(T_0:=T_{01}^{(0)}\), \(T_1:=T_{01}^{(1)}\), and
\(T(\tau)=T_{01}(\tau)=T_0+\tau T_1\).  Define
\(\Phi_H(\tau)=H-T(\tau)^\top HT(\tau)\).  If
\(\tau=\eta\tau_-+(1-\eta)\tau_+\), direct expansion gives
\begin{equation}
\label{eq:chord-identity}
  \Phi_H(\tau)-\eta\Phi_H(\tau_-)
  -(1-\eta)\Phi_H(\tau_+)
  =\eta(1-\eta)(\tau_+-\tau_-)^2T_1^\top HT_1
  \succeq0.
\end{equation}
At \(\tau=1/2\), let \(H\in\Q^{6\times6}\) solve
\[
  H-T_{01}(1/2)^\top HT_{01}(1/2)=I_6.
\]
Exact Sylvester tests give
\(H\succ0\), \(\Phi_H(49/100)\succ0\), and
\(\Phi_H(51/100)\succ0\).  Equation~\eqref{eq:chord-identity} therefore
gives uniform contraction throughout \([49/100,51/100]\).  Strict
complementarity places a sufficiently small closed \(H\)-ellipsoid around
the KKT point entirely inside the \(D_{01}\) projection region, proving the
first assertion.

For the initialization in Theorem~\ref{thm:main}, a \(232\)-step rational
componentwise enclosure is propagated uniformly on
\(|\tau-1/2|\leq10^{-10}\).  Every projection sign remains strict, and the
entire enclosure at step \(232\) lies in the preceding projection-safe
ellipsoid.  The first assertion then proves convergence.

For the third assertion, exact factorization of the \(D_{01}\)-branch
characteristic polynomial, followed by a real Schur recursion and a Sturm
root count, reduces the stability boundary to the unique root in \((0,1)\)
of \(p_{\rm stab}\) in \eqref{eq:stability-polynomial}; exact endpoint evaluation gives
the stated bracket.  If \(0<\tau<\tau_c\), Schur stability gives the unique
\(H_\tau\succ0\) satisfying
\[
  H_\tau-T_{01}(\tau)^\top H_\tau T_{01}(\tau)=I_6.
\]
In the induced norm there is a \(\kappa_\tau<1\) such that
\[
  \|T_{01}(\tau)e\|_{H_\tau}
  \leq\kappa_\tau\|e\|_{H_\tau}.
\]
Since \(q^\star=(-1,1)\) is in the interior of the strict branch, a
sufficiently small closed \(H_\tau\)-ellipsoid is invariant under the full
projected map. Therefore
\[
  \|w^{k+1}-w^\star\|_{H_\tau}
  \leq
  \kappa_\tau\|w^k-w^\star\|_{H_\tau},
\]
so \(w^\star\) is locally Q-linearly attracting.

Conversely, suppose that \(\tau_c\leq\tau<1\).  Since \(w^\star\) lies in
the interior of the strict \(D_{01}\) branch, the full projected map agrees
with its affine branch map on a neighborhood of \(w^\star\).  Local
Q-linear attraction on that neighborhood would imply an induced norm in
which the linear part \(T_{01}(\tau)\) has operator norm strictly smaller
than one, and hence
\(\rho(T_{01}(\tau))<1\).  This contradicts
\(\rho(T_{01}(\tau))\geq1\).  Thus the KKT point is not locally
Q-linearly attracting for \(\tau_c\leq\tau<1\).
\end{proof}

\subsection{Verification of Theorem~\ref{thm:relaxation}}
\label{app:period66-dual-step-certificate}

The dual-step certificate is verified by
\begin{center}
\texttt{python python/certify\_relaxed\_multiplier\_interval\_theory.py}.
\end{center}
It re-derives the essential-state projection-region matrices from the original updates,
checks the endpoint Sylvester minors and chord identity, propagates the exact
\(232\)-step finite-entry enclosure, and performs the Schur--Sturm boundary
test.  The boundary polynomial is
\begin{equation}
\label{eq:stability-polynomial}
\begin{aligned}
p_{\rm stab}(\tau)={}&
111794210406295556649228900462157733493\,\tau^3\\
&+23105776975281816108275814441284422085171521\,\tau^2\\
&-244157339715898821440243649673959463071543521\,\tau\\
&+208410060660460340386576638889814578828638507.
\end{aligned}
\end{equation}
An exact Sturm count gives one root of \(p_{\rm stab}\) in \((0,1)\), and rational
endpoint evaluation gives the bracket in Theorem~\ref{thm:relaxation}.

\section{Period-23 Certificate}
\label{app:k3-period23}

This appendix records the reduced branch maps, the return-map construction,
the proofs, and the invariant-ellipsoid data for the exact \(m=3\)
period-\(23\) certificate stated in Section~\ref{sec:k3-route}
(Proposition~\ref{prop:k3-period23}).  It is a fixed-QP initialization result
and does not assert robustness to perturbations of the problem data.

\subsection{Branch Map}
\label{app:k3-formulation}

Record the state immediately after the \(z\)- and multiplier updates.  With
\(\beta=1\), set
\[
  t=z+\lambda,
  \qquad
  v=(y,t)\in\R^6,
  \qquad
  S_t=[\,0_{3\times3}\ I_3\,],
\]
so that \(t=S_tv\).  The projection identity gives
\(z=[t]_+\) and \(\lambda=[t]_-\); thus \(v\) determines
\((z,\lambda)\) and the next ADMM step.

Each strict projection branch is determined by
\[
  \sigma(t):=(\operatorname{sgn}t_1,\operatorname{sgn}t_2,
  \operatorname{sgn}t_3)\in\{-,+\}^3.
\]
On a fixed branch the projection is linear, and eliminating the \(x\)- and
\(z\)-updates gives
\[
  \Phi_\sigma(v)=R_\sigma v+r_\sigma,
  \qquad
  R_\sigma\in\Q^{6\times6},\quad r_\sigma\in\Q^6.
\]

For quadratic objectives
\[
  F(x)=\tfrac12x^\top Q_x x+c_1^\top x,\qquad
  G(y)=\tfrac12y^\top Q_y y+c_2^\top y,
\]
set
\[
  K_x=(Q_x+A^\top A)^{-1},\quad
  K_y=(Q_y+B^\top B)^{-1},
  \qquad H_x=AK_xA^\top,\quad H_y=BK_yB^\top,
\]
\[
  D_\sigma=\diag\!\bigl(\mathbf 1_{\{\sigma_i=+\}}\bigr),\quad
  J_\sigma=2D_\sigma-I_3,\quad E_\sigma=I_3-D_\sigma,
\]
\[
  \xi=K_x(A^\top b-c_1),\qquad
  \eta=K_y(B^\top b-c_2-B^\top A\xi).
\]
Direct substitution in the \(x\)-, \(y\)-, and projection--multiplier
updates gives
\begin{equation}
\label{eq:k3-one-step-branch}
  R_\sigma=
  \begin{bmatrix}
    K_yB^\top H_xB
      &K_yB^\top(H_x-I_3)J_\sigma\\
    (I_3-H_y)H_xB
      &E_\sigma+\bigl((I_3-H_y)H_x+H_y\bigr)J_\sigma
  \end{bmatrix},
  \qquad
  r_\sigma=
  \begin{bmatrix}
    \eta\\ b-A\xi-B\eta
  \end{bmatrix}.
\end{equation}
If the QP data are rational, then every entry of \(R_\sigma\) and
\(r_\sigma\) is rational.

\subsection{Exact Data and Certificate}
\label{app:k3-construction}

Apply the reduced map above to the rational instance
\eqref{eq:k3-rational-data} stated in Section~\ref{sec:k3-route}.
The resulting strict projection-sign word is \(\mathcal W_{23}\) in
\eqref{eq:k3-sign-word}.

Take \(\omega=\mathcal W_{23}=(\sigma_0,\ldots,\sigma_{22})\).
Write the composition of its first \(j\) branch maps as
\[
  \begin{aligned}
  \Phi_\omega^{(j)}(v)&=L_jv+d_j,
  &L_0&=I_6,\quad d_0=0,\\
  L_{j+1}&=R_{\sigma_j}L_j,
  &d_{j+1}&=R_{\sigma_j}d_j+r_{\sigma_j}.
  \end{aligned}
\]
The full \(23\)-step return map is therefore
\begin{equation}
\label{eq:k3-return-map}
  \begin{aligned}
  v^{23(\ell+1)}
    &=\Phi_{\rm per}\bigl(v^{23\ell}\bigr),\\
  \Phi_{\rm per}(v)
    &:=\Phi_\omega^{(23)}(v)
      =M_{\rm per}v+c_{\rm per},\\
  M_{\rm per}
    &:=L_{23}=R_{\sigma_{22}}\cdots R_{\sigma_0},
  \qquad
    c_{\rm per}:=d_{23}.
  \end{aligned}
\end{equation}
Exact elimination shows that \(I_6-M_{\rm per}\) is
nonsingular, so define the phase-zero state by
\[
  \widehat v^0:=(I_6-M_{\rm per})^{-1}c_{\rm per}\in\Q^6,
  \qquad
  \widehat v^j:=L_j\widehat v^0+d_j
  \quad (j=0,\ldots,22).
\]
The complete exact rational initialization is recovered by
\[
  \widehat t^0=S_t\widehat v^0,\qquad
  \widehat z^0=[\widehat t^0]_+,\qquad
  \widehat\lambda^0=[\widehat t^0]_-,
\]
\[
  \widehat K_x:=
  (\widehat Q_x+\widehat A^\top\widehat A)^{-1},
  \qquad
  \widehat x^0
  =\widehat K_x\!\left[
    \widehat A^\top
    (\widehat b-\widehat B\widehat y^{22}
     -\widehat z^{22}+\widehat\lambda^{22})
    -\widehat c_1
  \right].
\]
These equations define the exact rational initialization; the machine
certificate stores its expanded numerators and denominators.

\begin{proof}[Proof of Proposition~\ref{prop:k3-period23}]
For \(\omega=\mathcal W_{23}\), the recursion above gives the exact rational
return map
\[
  (M_{\rm per},c_{\rm per})=(L_{23},d_{23}).
\]
Exact rational elimination solves the fixed-point system for
\(\widehat v^0\).  Exact replay reconstructs every phase state and verifies
the pattern, the strict margin,
\(\Phi_{\rm per}(\widehat v^0)=\widehat v^0\), and that the \(23\) phase
states are pairwise distinct.  Hence the sequence has minimal period \(23\).

Positive definiteness of \(\widehat Q_x\) and \(\widehat Q_y\) gives a unique
primal solution; feasibility determines \(z\), and nonsingularity of
\(\widehat A\) determines \(\lambda\).  Thus the KKT point is unique and is a
fixed point of the single-valued ADMM map.  It cannot belong to the
nonconstant period-\(23\) sequence, which is therefore non-KKT.

For nearby initializations, the matrix \(P\) in
Appendix~\ref{app:k3-certificate} satisfies
\[
  P-M_{\rm per}^\top P M_{\rm per}\succ0.
\]
It follows that the error decreases after each complete block of \(23\)
ADMM steps.  In particular, every eigenvalue of \(M_{\rm per}\) lies strictly
inside the unit disk.
The strict sign margin guarantees that every point in
\(\mathcal E_{\rm cert}\) continues to use the same \(23\) affine branches.
The return map sends this ellipsoid into itself, so induction gives the same
projection pattern and phasewise convergence for all subsequent returns.
The \(23\) limiting phase points are distinct; hence the full sequence does
not converge to a single point.
\end{proof}

\begin{proof}[Proof of Corollary~\ref{cor:k3-identity-block-equivalence}]
Set \(u=\widehat A x\) and \(w=\widehat B y\).  Nonsingularity of
\(\widehat A\) and \(\widehat B\) makes the change of variables bijective and
preserves the residual
\(\widehat A x+\widehat B y+z-\widehat b=u+w+z-\widehat b\).
The transformed objectives are
\[
  \widetilde Q_x=\widehat A^{-\top}\widehat Q_x\widehat A^{-1},\quad
  \widetilde Q_y=\widehat B^{-\top}\widehat Q_y\widehat B^{-1},\quad
  \widetilde c_1=\widehat A^{-\top}\widehat c_1,\quad
  \widetilde c_2=\widehat B^{-\top}\widehat c_2,
\]
which remain rational with \(\widetilde Q_x,\widetilde Q_y\succ0\).
The \(x\)- and \(y\)-subproblems are invertible reparameterizations of the
\(u\)- and \(w\)-subproblems, while the \(z\)- and multiplier updates are
unchanged.  Starting from \(u^0=\widehat A x^0\) and \(w^0=\widehat B y^0\),
the two direct ADMM trajectories correspond step by step.
\end{proof}

\subsection{Attracting Ellipsoid}
\label{app:k3-certificate}

Use the partial-composition matrices \(L_j\) and the selector \(S_t\) defined
in Appendix~\ref{app:k3-formulation}.  Set
\[
P=\frac12
\begin{bmatrix}
 2& 0& 0& 0&-1& 1\\
 0& 2& 1& 0&-2& 2\\
 0& 1& 6& 2&-7& 7\\
 0& 0& 2& 3&-3& 3\\
-1&-2&-7&-3&16&-13\\
 1& 2& 7& 3&-13&14
\end{bmatrix}.
\]
Its leading principal minors are
\[
  1,\quad 1,\quad \frac{11}{4},\quad \frac{25}{8},\quad
  \frac{333}{32},\quad \frac{441}{32},
\]
so \(P\succ0\).  Exact Sylvester tests also give
\[
  P-M_{\rm per}^\top P M_{\rm per}\succ0.
\]
For the \(i\)th standard basis vector \(\mathbf e_i\in\R^3\), define
\[
  a_{ji}:=\mathbf e_i^\top S_tL_j\in\Q^{1\times6}.
\]
Thus \(a_{ji}e\) is the perturbation of the \(i\)th projection input at phase
\(j\) caused by the initial-state error \(e=v^0-\widehat v^0\).  Define the
corresponding sign-preserving radius by
\[
  \bar r^2:=
  \min_{\substack{0\leq j<23,\ 1\leq i\leq3\\a_{ji}\neq0}}
  \frac{(\widehat t_i^j)^2}{a_{ji}P^{-1}a_{ji}^\top}.
\]
The exact certificate gives
\(\bar r^2>29/100000>1/4000\); the minimum occurs at phase \(14\) and
coordinate \(t_3\).  The \(P\)-norm Cauchy--Schwarz inequality therefore
preserves all \(69\) projection signs for
\(e^\top Pe<1/4000\).  Since
\(P-M_{\rm per}^\top PM_{\rm per}\succ0\), the return map leaves
\(\mathcal E_{\rm cert}\) invariant.  Induction gives the same branch word
and phasewise convergence to the non-KKT sequence.

The ellipsoid in \eqref{eq:k3-certified-ellipsoid} is sufficient but not
claimed maximal.  On the slice \(\Delta y=0\), \(\Delta t_3=0\),
\[
  \frac32(\Delta t_1)^2
  -3\Delta t_1\Delta t_2
  +8(\Delta t_2)^2<\frac1{4000},
  \qquad
  \Delta t_2=0\ \Longrightarrow\
  |\Delta t_1|<\frac1{\sqrt{6000}}.
\]
Every point in this interval is therefore a certified nonconvergent reduced
initialization.

\subsection{Reproduction}
\label{app:k3-reproduction}

From the same GitHub certificate repository root, running
\begin{center}
\texttt{python python/verify\_period23\_certificate.py}
\end{center}
verifies positive definiteness, nonsingularity, the unique KKT point,
\(23\)-step closure, \(23\) distinct phase states, strict projection
consistency with margin \(>1/250\), separation from the KKT point, and the
exact Lyapunov inequality for the return matrix.  The rational input, verifier,
and generated certificate are provided in that repository.

\section{AI-Assisted Research Records}
\label{app:discovery-provenance}
\label{sec:protocol}
\label{sec:protocol-detail}

This appendix records route dossiers, workspace manifests, prompt classes,
human interventions, promotion criteria, and the claim-to-artifact map.  The
accompanying GitHub repository
\begin{center}
\small\url{https://github.com/ConanXu-math/identity-slack-admm-cycle-certificate}
\end{center}
stores certificates, verifiers, and retained research artifacts.  Route
dossiers live under
\path{provenance/routes/\{codex-period66,kimi-period23\}/}.
The discovery-to-verification workflow is summarized in 
Figure~\ref{fig:research-loop}
(Section~\ref{sec:ai-comparison}).

Computer-assisted claims are supported by exact certificates and independently
executable verifiers.  Analytic claims are established by the proofs in the
paper, with symbolic scripts used to check the stated algebraic and spectral
predicates where applicable.  Language-model transcripts carry no independent
evidentiary weight.

\subsection{Prompt Records}
\label{app:provenance-prompts}
\label{app:provenance-init}
\label{sec:protocol-init}

Archived prompts are classified into three provenance classes:
\begin{itemize}
\item \emph{verbatim historical} --- retained exactly as issued;
\item \emph{reconstructed} --- rebuilt from retained transcripts or notes;
\item \emph{retrospective distilled} --- written after the fact to summarize a
  stage.
\end{itemize}
No distilled prompt is asserted to be a historical input.  Route dossiers and
retained prompt artifacts live under \path{provenance/routes/} in the GitHub
repository; a consolidated prompt ledger may be archived alongside them.

Representative entries (full texts in the GitHub route dossiers):
P-K-01 (\texttt{verbatim\_historical}), Kimi opening goal;
P-C-01 (\texttt{reconstructed}), Codex brief;
P-C-02 (\texttt{retrospective\_distilled}), \(\tau\)-only relaxation with
explicit quantifier scopes.

\subsection{Human Interventions}
\label{app:provenance-interventions}
\label{sec:protocol-intervention}

We distinguish \emph{human-specified}, \emph{AI-proposed}, and
\emph{jointly refined} steps.  Human direction selected the mathematical
question, configured the workspace and tools, set claim scopes, and audited
literature; model outputs nominated representations, candidates, and
certificate designs.  Table~\ref{tab:intervention} records the staged
attribution for the Codex route.  Attributions are based on the retained
transcripts; where a representation emerged during the interaction and cannot
be attributed to a single side, it is labeled jointly refined.  Typical Codex
gates include rejecting fixed-branch instability alone as a counterexample,
requiring exact rational closure, and separating KKT-local from class-uniform
claims in the \(\tau\)-extension.  The Kimi dossier records the blind opening
and subsequent continue-search instructions under the frozen workspace rules.
Route dossiers under \path{provenance/routes/} retain the supporting
artifacts.

\begin{table}[H]
\centering
\caption{Recorded stages of the Codex route, with the promotion condition
that gated each stage.}
\label{tab:intervention}
\small
\begin{tabular}{
  >{\raggedright\arraybackslash}p{0.185\textwidth}
  >{\raggedright\arraybackslash}p{0.24\textwidth}
  >{\raggedright\arraybackslash}p{0.25\textwidth}
  >{\raggedright\arraybackslash}p{0.22\textwidth}}
\toprule
\textbf{Stage} & \textbf{Human input} & \textbf{AI output} &
\textbf{Promotion condition}\\
\midrule
Problem setup & Identity-slack question; evidence requirements &
Proof and counterexample route proposals &
Correct literature boundary\\
State-space reduction & Request for a certifiable representation &
Signed reduced state \(s=(y,q)\), \(q=z+\lambda\) (jointly refined) &
Equivalence to raw ADMM\\
Candidate screening & Requirement that fixed-branch instability alone is
insufficient & Fixed-branch spectra; near-rotation heuristic &
Projection itinerary must be realizable\\
Period search & Requirement of a strict exact counterexample &
Rational data \eqref{eq:short-data}--\eqref{eq:b-data} and word
\((00)^2(01)^{64}\) & Exact rational closure and strict signs
(\(G_1\)--\(G_7\))\\
Relaxation extension & Frozen QP and initialization; vary only \(\tau\);
quantifier gates & Parameterized branch maps; candidate step regimes;
certificate routes & Quantifier audit and exact certificates\\
\bottomrule
\end{tabular}
\end{table}

\subsection{Promotion Gates}
\label{app:provenance-gates}
\label{sec:protocol-gates}
\label{sec:protocol-artifacts}

A candidate becomes a mathematical claim only after passing
\[
\begin{aligned}
  G_1&:\ \text{exact problem data,} &
  G_2&:\ \text{KKT and subproblem validity,}\\
  G_3&:\ \text{exact period closure,} &
  G_4&:\ \text{strict branch admissibility,}\\
  G_5&:\ \text{raw ADMM replay,} &
  G_6&:\ \text{minimal-period verification,}\\
  G_7&:\ \text{quantifier and literature audit.} &&
\end{aligned}
\]
Gates \(G_1\)--\(G_6\) are mathematical or computational checks, carried out
in exact arithmetic where applicable; \(G_7\) is a human audit of quantifiers,
novelty, and literature scope.  Floating-point orbits and model-generated
derivations nominate candidates only.  For stability claims, \(G_3\)--\(G_6\)
are replaced by the relevant Lyapunov, Schur, Sturm, or interval predicates.
In particular, KKT-local convergence, convergence from a specified
initialization, fixed-problem guarantees, and class-uniform guarantees are
treated as distinct promotion scopes rather than stylistic qualifications.

\subsection{Claim-to-Artifact Map}
\label{app:provenance-claims}

Verification entry points in the GitHub repository (first exact replayable
certificate as endpoint):
\begin{center}
\scriptsize
\begin{tabular}{ll}
\toprule
\textbf{Claim} & \textbf{Verification command}\\
\midrule
Theorem~\ref{thm:main} &
  \texttt{python python/verify\_certificate\_pair.py}\\
Theorem~\ref{thm:relaxation} &
  \texttt{python python/certify\_relaxed\_multiplier\_interval\_theory.py}\\
Theorem~\ref{thm:no-universal-dual-step} &
  \texttt{python python/verify\_universal\_step\_obstruction.py --check}\\
Proposition~\ref{prop:k3-period23} &
  \texttt{python python/verify\_period23\_certificate.py}\\
One-shot acceptance (66+23) &
  \texttt{python python/verify\_all.py}\\
\bottomrule
\end{tabular}
\end{center}
The computer-assisted claims listed in the table rest on these exact replays,
independently of language-model transcripts.
\end{document}